\def\overset#1#2{{\mathrel{\mathop {{#2}_{}}\limits^{#1}}}}
\def\underset#1#2{{\mathrel{\mathop {{}_{} {#2}}\limits_{{#1}_{}}}}}
\def\upplim_#1{\underset{#1}{\overline\lim}\;}
\def\lowlim_#1{\underset{#1}{\underline\lim}\;}
\newtheorem{claim}[equation]{\indent \rm {\it Claim}}
\newtheorem{definition}[equation]{Definition}
\newtheorem{lemma}[equation]{Lemma}
\newtheorem{proposition}[equation]{Proposition}
\newtheorem{theorem}[equation]{Theorem}
\newcommand{\C}{{\mathbb{C}}}
\newcommand{\N}{\mathbb{N}}
\newcommand{\B}{\mathbb{B}}
\renewcommand{\P}{{\mathbb{P}}}
\newcommand{\R}{{\mathbb{R}}}
\newcommand{\Z}{\mathbb{Z}}
\numberwithin{equation}{section}
\begin{document}
\title[Meromorphic mappings of a complete K\"{a}hler manifolds]{Degeneracy theorems for meromorphic mappings of complete K\"{a}hler manifolds sharing hyperplanes in projective spaces} 

\author{Si Duc Quang$^{1,2}$}
\address{$^{1}$Department of Mathematics, Hanoi National University of Education\\
136-Xuan Thuy, Cau Giay, Hanoi, Vienam.}
\address{$^{2}$Thang Long Institute of Mathematics and Applied Sciences\\
Nghiem Xuan Yem, Hoang Mai, Ha Noi.}
\email{quangsd@hnue.edu.vn}

\def\thefootnote{\empty}
\footnotetext{
2010 Mathematics Subject Classification:
Primary 32H30, 32A22; Secondary 30D35.\\
\hskip8pt Key words and phrases: degeneracy theorem, K\"{a}hler manifold, non-integrated defect relation.\\
}

\begin{abstract} {Let $M$ be a complete K\"{a}hler manifold, whose universal covering is biholomorphic to a ball $\B^m(R_0)$ in $\C^m$ ($0<R_0\le +\infty$). In this article, we will show that if three meromorphic mappings $f^1,f^2,f^3$ of $M$ into $\P^n(\C)\ (n\ge 2)$ satisfy the condition $(C_\rho)$  and share $q\ (q> C+\rho K)$ hyperplanes in general position regardless of multiplicity with certain positive constants $K$ and $C <2n$ (explicitly estimated), then there are some algebraic relation between them. A degeneracy theorem for the product of $k\ (2\le k\le n+1)$ meromorphic mappings sharing hyperplanes is also given. Our results generalize the previous results in the case of  meromorphic mappings from $\C^m$ into $\P^n(\C)$.}
\end{abstract}
\maketitle

\section{Introduction}

Let $f$ be a linearly nondegenerate meromorphic mapping of $\C^m$ into $\P^n(\C)$, $d$ be a positive integer and $H_1,\ldots,H_q$ be $q$ hyperplanes of $\P^n(\C)$ in general position with
$$\dim  f^{-1}(H_i \cap H_j) \le m-2 \quad (1 \le i<j \le q).$$
We consider the set $\mathcal {F}(f,\{H_i\}_{i=1}^q,d)$ of all linearly nondegenerate meromorphic mappings $g: \C^m \to \P^n(\C)$ satisfying the following two conditions:

\ (a)\ $\min\{\nu_ {(f,H_j)}(z),d\}=\min\{\nu_{(g,H_j)}(z),d\}\quad (1\le j \le q),$

\ (b) \ $f(z) = g(z)$ on $\bigcup_{j=1}^{q}f^{-1}(H_j)$.\\
Here $\nu_{(f,H)}(z)$ stands for the intersecting multiplicity of the image of $f$ with a hyperplane $H$ at the point $f(z)$. Hence, $\nu_{(f,H)}$ may be considered as  the divisor $f^*H$. If $d=1$, we will say that $f$ and $g$ share $q$ hyperplanes $\{H_j\}_{j=1}^q$ regardless of multiplicity.

In 1988, S. Ji \cite{J} showed that if $n\ge 2$, then the map $f^1\times f^2\times f^3 :\C^m\longrightarrow \P^n(\C)\times\P^n(\C)\times\P^n(\C)$ is algebraically degenerate for every three maps $f^1,f^2,f^3\in\mathcal F(f,\{H_i\}_{i=1}^{3n+1},1)$. Later,  in 1998, H. Fujimoto \cite{F98} proved a degeneracy theorem for $n+2$ meromorphic mappings sharing $2n+2$ hyperplanes with multiplicities are counted to level $\dfrac{n(n+1)}{2}+n$ as follows.

\vskip0.2cm
\noindent
\textbf{Theorem A.}\  {\it Suppose that $q\ge 2n+2$ and $d=\dfrac{n(n+1)}{2}+n$ and take arbitrary $n+2$ mappings $f_1,\dots , f_{n+2}$ in $\mathcal F(f,\{H_i\}_{i=1}^q,d).$ Then, there are $n +1$ hyperplanes $H_{i_0} , . . . , H_{i_n}$ among $H_i'$s such that for each pair $(j,k)$ with $0\le i<k\le n$, we have that
$$
\frac{(f^2,H_{i_j})}{(f^2,H_{i_k})}-\frac{(f^1,H_{i_j})}{(f^1,H_{i_k})},\frac{(f^3,H_{i_j})}{(f^3,H_{i_k})}-\frac{(f^1,H_{i_j})}{(f^1,H_{i_k})},\cdots ,\frac{(f^{n+2},H_{i_j})}{(f^{n+2},H_{i_k})}-\frac{(f^1,H_{i_j})}{(f^1,H_{i_k})}$$
are linearly dependent.}

\vskip0.2cm
The above results of L. Smiley and H. Fujimoto have been extended by many authors, such as  \cite{QQ,CY} and others.

Recently, S. D. Quang in \cite{Q1} proved a stronger result as follows.

\vskip0.2cm
\noindent
{\bf Theorem B.}\ {\it If $n\ge 2$, then the family $\mathcal F(f, \{H_i\}_{i=1}^{2n+2}, 1)$ contains at most two maps.}

\vskip0.2cm
This result have covered all previous results on the degeneracy and algebraic dependence problem of meromorphic mappings sharing at least $2n+2$ hyperplanes in general position of $\P^n(\C)$. In \cite{NQ}, N. T. Nhung - L. N. Quynh firstly showed a algebraic relation between mappings which share less than $2n+2$ hyperplanes in general position regardless of multiplicities as follows.

\vskip0.2cm
\noindent
{\bf Theorem C.}\ {\it Let $f$ be a linearly non-degenerate meromorphic mapping of $\C^m$ into $\P^n(\C)$ and let $H_1,\ldots,H_q$ be $q$ hyperplanes of $\P^n(\C)$ in general position such that
$$ \dim f^{-1}(H_i)\cap f^{-1}(H_j)\le m-2, \ \forall 1\le i<j\le q. $$ 
Let $f^1,f^2,f^3$ be three maps in $\mathcal F(f,\{H_i\}_{i=1}^q,1)$. Assume that $q\ge\frac{n+6+\sqrt{7n^2+2n+4}}{2}$. Then there exist $[\frac{q}{2}]$ hyperplanes $H_{i_1},\ldots,H_{i_{[\frac{q}{2}]}}$ among $H_i'$s such that:
$$\frac{(f^1,H_{i_j})}{(f^1,H_{i_1})}=\frac{(f^2,H_{i_j})}{(f^2,H_{i_1})}\text{ or } \frac{(f^2,H_{i_j})}{(f^2,H_{i_1})}=\frac{(f^3,H_{i_j})}{(f^3,H_{i_1})}\text{ or }
\frac{(f^3,H_{i_j})}{(f^3,H_{i_1})}=\frac{(f^1,H_{i_j})}{(f^1,H_{i_1})},$$
for every $j\in\{2,\ldots ,[\frac{q}{2}]\}$.}

Recently, by introducing the notion of ``\textit{functions of small integration}'' with respect to meromorphic mappings on K\"{a}hler manifold, in \cite{Q3} we have generalized Theorem B to the case of meromorphic mappings of a complete K\"{a}hler manifold into $\P^n(\C)$. Motivated our method in \cite{Q3}, in this paper, we will generalize Theorem C to the case of meromorphic mappings of a complete K\"{a}hler manifold.

To state our first main result, we need to recall the following.

Let $M$ be an $m$-dimensional complete K\"{a}hler manifold with K\"{a}hler form $\omega$ and $f$ be a meromorphic map of $M$ into $\P^n(\C)$. Throughout this paper, we always assume that the universal covering of $M$ is biholomorphic to a ball $\B^m(R_0)$ in $\C^m$\ ($0<R_0\le +\infty$). For $\rho\ge 0$, we
say that $f$ satisfies the condition ($C_\rho$) if there exists a nonzero bounded continuous real-valued function $h$ on $M$ such that
$$\rho\Omega_f+dd^c\log h^2\ge \mathrm{Ric}\omega,$$
where $\Omega_f$ denotes the pull-back of the Fubini-Study metric form on $\P^n(\C)$ by $f$.

Let $f$ be a linearly non-degenerate meromorphic mapping from $M$ into $\P^n(\C)$ which satisfies the condition $(C_\rho)$.
Let $H_1,\ldots,H_q$ be $q$ hyperplanes of $\P^n(\C)$ in general possition. 
Assume that 
$$\dim f^{-1}(H_i)\cap f^{-1}(H_j) \le m-2 \quad (1 \le i<j \le q).$$ 
The family $\mathcal {F}(f,\{H_i\}_{i=1}^q,d)$ is defined similarly as above.

Our main result is stated as follows.

\begin{theorem}\label{thm1.1}
Let $M$ be an $m$-dimensional complete K\"{a}hler manifold whose universal covering is biholomorphic to a ball $\B^m(R_0)$ in $\C^m$\ ($0<R_0\le +\infty$), and let $f$ be a linearly non-degenerate meromorphic mapping of $M$ into $\P^n(\C)\ (n\ge 2)$. Let $H_1,\ldots,H_q$ be $q$ hyperplanes of $\P^n(\C)$ in general possition. Assume that $f$ satisfies the condition $(C_\rho)$ and
$$\dim f^{-1}(H_i)\cap f^{-1}(H_j) \le m-2 \quad (1 \le i<j \le q).$$ 
Let $f^1,f^2,f^3$ be three maps in $\mathcal F(f,\{H_i\}_{i=1}^q,1)$. Assume that 
$$q\ge\frac{n+6+(7n^2+2n+4)^{1/2}}{2}+\left (\rho\frac{3n((n+1)(q+n-3)+q-2)}{2}\right)^{1/2}.$$ 
Then there exist $[\frac{q}{2}]$ hyperplanes $H_{i_1},\ldots,H_{i_{[\frac{q}{2}]}}$ among $H_i'$s such that:
$$\frac{(f^1,H_{i_j})}{(f^1,H_{i_1})}=\frac{(f^2,H_{i_j})}{(f^2,H_{i_1})}\text{ or } \frac{(f^2,H_{i_j})}{(f^2,H_{i_1})}=\frac{(f^3,H_{i_j})}{(f^3,H_{i_1})}\text{ or }
\frac{(f^3,H_{i_j})}{(f^3,H_{i_1})}=\frac{(f^1,H_{i_j})}{(f^1,H_{i_1})},$$
for every $j\in\{2,\ldots ,[\frac{q}{2}]\}$.
\end{theorem}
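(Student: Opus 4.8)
The plan is to transplant the argument of \cite{NQ} for Theorem C to the K\"ahler setting, using in place of the classical Nevanlinna theory the second main theorem and the ``functions of small integration'' for meromorphic maps of a complete K\"ahler manifold satisfying $(C_\rho)$ developed in \cite{Q3}; the hypothesis $(C_\rho)$ and the ball structure of the universal covering enter only through those tools, and all the curvature error is collected into the explicit $\rho$-term at the very end.

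\textbf{Reformulation.} If two of $f^1,f^2,f^3$ coincide the conclusion is immediate (one class contains every index), so we may assume they are pairwise distinct. Fix reduced representations $\tilde f^t=(f^t_0,\dots,f^t_n)$ and write $(f^t,H_i)=\sum_j a_{ij}f^t_j$. For $i\ne j$ and $1\le s<t\le 3$ set
\[
 V^{st}_{ij}=\frac{(f^s,H_i)}{(f^s,H_j)}\cdot\frac{(f^t,H_j)}{(f^t,H_i)} .
\]
Because the ratio $(g,H_i)/(g,H_j)$ depends only on the point $g(z)$ and $f^1=f^2=f^3$ on $\bigcup_k f^{-1}(H_k)$, the function $V^{st}_{ij}-1$ vanishes on $\bigcup_{k\ne i,j}f^{-1}(H_k)$; and since each factor is a meromorphic function with characteristic $\le T_{f^s}(r)+O(1)$, resp. $\le T_{f^t}(r)+O(1)$, one has $T(r,V^{st}_{ij})\le T_{f^s}(r)+T_{f^t}(r)+O(1)$. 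The relation $i\approx_{st}j:\Longleftrightarrow V^{st}_{ij}\equiv 1$ is an equivalence relation on $\{1,\dots,q\}$, and $i_1\approx_{st}i_j$ says exactly that $(f^s,H_{i_j})/(f^s,H_{i_1})=(f^t,H_{i_j})/(f^t,H_{i_1})$. Writing $[i]_{st}$ for the $\approx_{st}$-class of $i$, the conclusion is therefore equivalent to: \emph{there is an index $i_1$ with $\big|[i_1]_{12}\cup[i_1]_{23}\cup[i_1]_{31}\big|\ge[q/2]$.}

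\textbf{The contradiction.} Suppose this fails: for every $i$, $\big|[i]_{12}\cup[i]_{23}\cup[i]_{31}\big|\le[q/2]-1$. Fixing an index (say $1$), there are then at least $q-[q/2]+1$ indices $j$ with $1\not\approx_{st}j$ for all three pairs $s<t$; following \cite{NQ} one uses the corresponding cross-functions $V^{st}_{1j}$ — whose divisors contain $\sum_{k\ne 1,j}\min\{\nu_{(f,H_k)},1\}$ while their characteristics are $\le T_{f^s}(r)+T_{f^t}(r)+O(1)$ — together, possibly after assembling the $(f^t,H_i)$ into auxiliary holomorphic maps as in the proof of Theorem A, to obtain a lower bound of the form
\[
 \|\quad \sum_{k=1}^{q}N^{[1]}_{(f,H_k)}(r)\ \le\ (\text{explicit coefficient})\cdot\big(T_{f^1}(r)+T_{f^2}(r)+T_{f^3}(r)\big)+o\big(\max_t T_{f^t}(r)\big).
\]
Here $\dim f^{-1}(H_i)\cap f^{-1}(H_j)\le m-2$ guarantees the divisors $f^{-1}(H_k)$ share no component, so the counting functions add. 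On the other side, the K\"ahler second main theorem of \cite{Q3}, applied to each linearly non-degenerate $f^t$ — which satisfies a condition of type $(C_\rho)$ because it shares all the $H_k$ with $f$ and has characteristic comparable to that of $f$ — yields
\[
 \|\quad (q-n-1)\,T_{f^t}(r)\ \le\ \sum_{k=1}^{q}N^{[n]}_{(f^t,H_k)}(r)+\rho\cdot(\text{explicit coefficient})\cdot T_{f^t}(r)+o\big(T_{f^t}(r)\big),
\]
and, using $N^{[n]}_{(f^t,H_k)}\le n\,N^{[1]}_{(f^t,H_k)}=n\,N^{[1]}_{(f,H_k)}$, this bounds each $T_{f^t}(r)$ above in terms of $\sum_k N^{[1]}_{(f,H_k)}(r)$. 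Feeding this back and letting $r\to\infty$ outside a set of finite measure, the characteristic functions cancel and one is left with a numerical inequality among $q$, $n$, $\rho$; doing the bookkeeping so that the coefficients are sharp, this inequality is precisely the one ruled out by the stated lower bound on $q$, a contradiction.

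\textbf{Main difficulty.} The combinatorics — choosing the indices from the class-size bound, and the final extraction of $H_{i_1},\dots,H_{i_{[q/2]}}$ with base $H_{i_1}$ — is formally the same as in \cite{NQ} and transplants without change once the analytic inputs are in hand. The substantive work is: (i) setting up the ``functions of small integration'' formalism of \cite{Q3} so that the non-compactness of $\B^m(R_0)$ and the term $\Ric\,\omega$ are absorbed into $o(\cdot)$ plus the explicit $\rho$-contribution — in particular verifying that each $f^t$ inherits a suitable curvature condition from $f$; and (ii) tracking all the constants (the number of cross-functions used, the truncation level $n$, the three maps, and the hyperplanes contributing to the Ricci term) precisely enough to land on the coefficient $\tfrac{3n((n+1)(q+n-3)+q-2)}{2}$ of $\rho$ rather than a cruder value. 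I expect (ii) — pinning down that coefficient — to be the most delicate point of the proof.
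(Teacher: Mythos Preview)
Your plan has the right overall shape --- reduce to a combinatorial statement about classes of indices, assume it fails, and squeeze a numerical contradiction out of the second-main-theorem machinery of \cite{Q3} --- but it is missing the specific analytic mechanism that makes the constants come out right, and as written I do not think it closes.

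The paper's argument is genuinely two-staged, and the first stage is absent from your proposal. Before one can attack the conclusion of the theorem, one first proves a \emph{weaker} statement (Lemma~\ref{lem3.8}): there exist $[\tfrac{q}{2}]+1$ hyperplanes $H_{i_0},\dots,H_{i_{[q/2]}}$ such that for each $j$ the three ratios $(f^u,H_{i_j})/(f^u,H_{i_0})$ satisfy a nontrivial \emph{linear} relation over $\C$ (not yet an equality of two of them). This is obtained not from your cross-functions $V^{st}_{ij}-1$ but from Cartan's auxiliary function
\[
\Phi^\alpha(F_1^{ij},F_2^{ij},F_3^{ij})
= F_1^{ij}F_2^{ij}F_3^{ij}\,
\det\begin{pmatrix}1&1&1\\ 1/F_1^{ij}&1/F_2^{ij}&1/F_3^{ij}\\ \mathcal D^\alpha(1/F_1^{ij})&\mathcal D^\alpha(1/F_2^{ij})&\mathcal D^\alpha(1/F_3^{ij})\end{pmatrix},
\]
whose zero-divisor (Lemma~\ref{lem3.2}) picks up an extra order along every $f^{-1}(H_t)$, $t\ne i,j$, because of the Wronskian column structure, and whose vanishing for all $|\alpha|=1$ forces the linear relation (Lemma~\ref{lem2.4}). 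A Dirac--Hamilton-cycle argument on the graph of pairs with $\Phi^\alpha\not\equiv 0$ then converts ``few such pairs'' into a product of $g_{\{i,\sigma(i)\}}$'s lying in $B(2q,2q;f^1,f^2,f^3)$, to which Proposition~\ref{prop2.9} applies. Your functions $V^{st}_{ij}-1$ carry no derivative and give only one order of vanishing per hyperplane; they will not produce the coefficient $\tfrac{3nq}{2q+3n-6}$ needed here.

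Only in the \emph{second} stage does one assume the theorem's conclusion fails. The point is that failure, combined with Lemma~\ref{lem3.8}, forces a genuine three-term relation $(\beta_2-\alpha_2)F_0+\alpha_2 F_1=\beta_2 F_2$ with all coefficients nonzero; this lets one build a linearly nondegenerate map $F:\B^m(R_0)\to\P^1(\C)$ out of two of the ratios, and its Wronskian $W^\alpha(F)$ furnishes an element of $S(1;f^1,f^2,f^3)$. A second Hamilton-cycle argument (now on pairwise differences $(f^1,H_i)(f^2,H_j)-(f^1,H_j)(f^2,H_i)$ over the remaining $q-2$ hyperplanes) produces a function in $B(2(q-2),3(q-2);f^1,f^2,f^3)$, and Proposition~\ref{prop2.9} yields the final contradiction with exactly the $\rho$-coefficient in the hypothesis.

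Two smaller corrections. First, the analytic input from \cite{Q3} is not a second main theorem applied separately to each $f^t$ and then combined with $N^{[n]}\le nN^{[1]}$; it is Proposition~\ref{prop2.9}, which eats a single holomorphic function $h\in B(p,l_0;f^1,f^2,f^3)$ with $\nu_h\ge\lambda\sum_{u,i}\nu^{[n]}_{(f^u,H_i)}$ and returns the inequality $q\le n+1+\rho\,\tfrac{3n(n+1)}{2}+\tfrac{p+\rho l_0}{\lambda}$ directly. Building the right $h$ (with the right $p,l_0,\lambda$) is the whole game. Second, your worry about each $f^t$ inheriting $(C_\rho)$ from $f$ is not how the argument runs and is not addressed in the paper; Proposition~\ref{prop2.9} is stated for maps each satisfying $(C_\rho)$, and this is taken as part of the standing hypotheses on the family.
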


Note: If $M=\C^m$ then we may choose $\rho=0$, and hence Theorem \ref{thm1.1} immediately implies Theorem C.

In the last section of this paper, we will prove a degeneracy theorem for a family of meromorphic mappings of a complete K\"{a}hler manifold sharing hyperplanes as follows.

\begin{theorem}\label{thm1.2}
Let $M$ be an $m$-dimensional complete K\"{a}hler manifold whose universal covering is biholomorphic to a ball $\B^m(R_0)$ in $\C^m$\ ($0<R_0\le +\infty$), and let $f$ be a linearly non-degenerate meromorphic mapping of $M$ into $\P^n(\C)\ (n\ge 2)$ satisfying the condition $(C_\rho)$. Let $H_1,\ldots,H_q$ be $q$ hyperplanes of $\P^n(\C)$ in general possition such that
$$\dim f^{-1}(H_i)\cap f^{-1}(H_j) \le m-2 \quad (1 \le i<j \le q).$$ 
Let $f^1,\ldots, f^k$ be $k$ mappings in $\mathcal {F}(f,\{H_i\}_{i=1}^q,n)$. Assume that 
$$q> n+1+\frac{knq}{kn+(k-1)q-k}+\rho\frac{kn(n+1)}{2}.$$
Then $f^1\times \cdots\times f^k$ is algebraic degenerate.
\end{theorem}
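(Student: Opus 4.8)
The plan is to follow the standard Second-Main-Theorem-plus-averaging strategy adapted to the Kähler setting via ``functions of small integration'', exactly in the spirit of the proof of Theorem \ref{thm1.1} and of \cite{Q3}. Suppose, for contradiction, that $f^1\times\cdots\times f^k$ is linearly (hence algebraically, since each $f^t$ is linearly nondegenerate) nondegenerate. Fix reduced representations $\tilde f^t=(f^t_0,\dots,f^t_n)$ and linear forms $L_i$ defining $H_i$, and write $(f^t,H_i):=L_i(\tilde f^t)$. The first step is to set up, for each index $i$, the meromorphic functions $h^{st}_i:=(f^s,H_i)/(f^t,H_i)$ and to exploit the sharing hypothesis $f^1,\dots,f^k\in\mathcal F(f,\{H_i\}_{i=1}^q,n)$: on $\bigcup_j f^{-1}(H_j)$ the maps agree, and the truncated-to-$n$ divisors coincide, so the zero and pole divisors of the quotients $h^{st}_i/h^{st}_j$ are controlled by the ``small'' counting functions $N^{[1]}(r,\nu_{(f,H_\ell)})$ supported where some $L_\ell$ vanishes. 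The contradiction will come from comparing a lower bound coming from nondegeneracy against an upper bound coming from the Second Main Theorem for moving/fixed hyperplanes on a Kähler manifold under condition $(C_\rho)$.

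Next I would run the Cartan–Nochka / Second Main Theorem machinery on the Kähler manifold. For each $f^t$ satisfying $(C_\rho)$, the appropriate non-integrated defect relation (the version proved for complete Kähler manifolds whose universal cover is a ball, as used to obtain Theorem \ref{thm1.1}) gives
\[
\bigl\|\ (q-n-1)T_{f^t}(r)\le \sum_{i=1}^q N^{[n]}_{(f^t,H_i)}(r)+\frac{\rho n(n+1)}{2}\,\|\cdot\|\text{-term}+o(T_{f^t}(r)),
\]
where the $\rho$-term is the genuinely new contribution absorbing $\Ric\omega$ through the bounded function $h$ in $(C_\rho)$; summing over $t=1,\dots,k$ produces the factor $\tfrac{kn(n+1)}{2}\rho$ visible in the hypothesis. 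The second ingredient is an auxiliary function: from the assumed nondegeneracy of the product map one builds, as in Fujimoto and in \cite{Q1,Q3}, a nonzero Wronskian-type or Cauchy-determinant-type function $\Phi$ in the coordinates of the $k$ maps whose divisor of zeros dominates a large combination of the pull-back divisors of the $H_i$ — this is what forces an inequality going the ``wrong'' way against the SMT bound, and the bookkeeping of how many hyperplanes each determinant ``sees'' is where the rational function $\tfrac{knq}{kn+(k-1)q-k}$ enters.

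The main obstacle, and the step that needs the most care, is the construction and estimation of this auxiliary function on $M$ rather than on $\C^m$: on a Kähler manifold one cannot take global logarithmic derivatives naively, so $\Phi$ must be built on the universal cover $\B^m(R_0)$, shown to descend (or to transform by a factor that is harmless under the $\|\cdot\|$-exceptional-set convention), and controlled using the lemma on logarithmic derivative in the ball together with the auxiliary weight $h$ from $(C_\rho)$ — this is precisely the role of the ``functions of small integration'' technology from \cite{Q3}, and I would invoke it as a black box. Granting that, one combines: (i) the lower bound $\ord\,\Phi\ge$ (sum of truncated contact divisors), (ii) the SMT upper bounds for each $f^t$, and (iii) the defect/counting estimates for the shared divisors, obtaining an inequality of the shape $\bigl(q-n-1-\tfrac{knq}{kn+(k-1)q-k}-\tfrac{kn(n+1)}{2}\rho\bigr)\sum_t T_{f^t}(r)\le o(\sum_t T_{f^t}(r))$. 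Since the hypothesis makes the bracketed coefficient strictly positive, this is absurd, so the product map must be algebraically degenerate, completing the proof.
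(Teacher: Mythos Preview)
Your high-level strategy matches the paper's: assume the product map is algebraically nondegenerate, build an auxiliary holomorphic function whose zero divisor dominates a positive multiple of $\sum_u\sum_i\nu^{[n]}_{(f^u,H_i)}$, and feed it into Proposition~\ref{prop2.9} to contradict the hypothesis on $q$. But you misidentify where the work lies and leave the central construction unspecified. The quotients $h^{st}_i$ you introduce are not used at all, and the ``main obstacle'' you flag---logarithmic derivatives and descent from the universal cover---does not arise in this theorem.

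The paper's auxiliary function is purely algebraic, with \emph{no derivatives}. For each $k$-subset $I=(i_1,\dots,i_k)\subset\{1,\dots,q\}$ one sets $P_I=\det\bigl((f^s,H_{i_t})\bigr)_{1\le s,t\le k}$; algebraic nondegeneracy of the product forces every $P_I\not\equiv 0$. The divisor estimate is Lemma~\ref{lem3.1}: since the $f^u$ agree on $\bigcup_i f^{-1}(H_i)$, the last $k-1$ columns of a suitably rewritten matrix vanish there, giving
\[
\nu_{P_I}\ge\sum_{s=1}^k\bigl(\min_u\nu_{(f^u,H_{i_s})}-\nu^{[1]}_{(f,H_{i_s})}\bigr)+(k-1)\sum_{i=1}^q\nu^{[1]}_{(f,H_i)},
\]
and the truncation level $n$ in the sharing hypothesis converts $\min_u\nu_{(f^u,H_{i_s})}$ to $\nu^{[n]}_{(f,H_{i_s})}$. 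Putting $P=\prod_I P_I$ and summing over all $I$ yields $\nu_P\ge\lambda\sum_u\sum_i\nu^{[n]}_{(f^u,H_i)}$ with $\lambda=\tfrac{p}{q}+\tfrac{p((k-1)q-k)}{knq}$, and $P\in B(p,0;f^1,\dots,f^k)$ with $l_0=0$. Proposition~\ref{prop2.9} then gives $q\le n+1+\tfrac{knq}{kn+(k-1)q-k}+\rho\tfrac{kn(n+1)}{2}$, the desired contradiction. The small-integration machinery enters only in the trivial case $l_0=0$, so the analytic subtleties you anticipate never appear; the substance of the proof is the combinatorics of averaging Lemma~\ref{lem3.1} over all $k$-subsets, which is exactly what produces the fraction $\tfrac{knq}{kn+(k-1)q-k}$.
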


\section{Basic notions and auxiliary results from Nevanlinna theory}

\begin{definition}[Cartan's auxialiary function]\label{def2.2}
For meromorphic functions $F,G,H$ on $\B^m(R_0)$ and $\alpha =(\alpha_1,\ldots ,\alpha_m)\in \Z_+^m$, we define the Cartan's auxiliary function as follows:
$$
\Phi^\alpha(F,G,H):=F\cdot G\cdot H\cdot\left | 
\begin {array}{cccc}
1&1&1\\
\frac {1}{F}&\frac {1}{G} &\frac {1}{H}\\
\mathcal {D}^{\alpha}(\frac {1}{F}) &\mathcal {D}^{\alpha}(\frac {1}{G}) &\mathcal {D}^{\alpha}(\frac {1}{H})
\end {array}
\right|.
$$
\end{definition}

\begin{lemma}[{see \cite[Proposition 3.4]{F98}}]\label{lem2.3}
If $\Phi^\alpha(F,G,H)=0$ and $\Phi^\alpha(\frac {1}{F},\frac {1}{G},\frac {1}{H})=0$ for all $\alpha$ with $|\alpha|\le 1$, then one of the following assertions holds:

(i) \ $F=G, G=H$ or $H=F$,

(ii) \ $\frac {F}{G},\frac {G}{H}$ and $\frac {H}{F}$ are all constant.
\end{lemma}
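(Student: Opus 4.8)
The plan is to compute $\Phi^\alpha(F,G,H)$ explicitly for all $\alpha$ with $|\alpha|\le 1$ and then read the dichotomy off two algebraic identities. For $\alpha=0$ the operator $\mathcal D^\alpha$ is the identity, so the last two rows of the defining determinant coincide and $\Phi^0\equiv 0$; this hypothesis is vacuous, and all the content sits in the case $|\alpha|=1$, where $\mathcal D^\alpha=\partial_j:=\partial/\partial z_j$ for a single coordinate. Factoring $F,G,H$ out of the three columns of the determinant and using the identity $x\,\partial_j(1/x)=-\partial_j\log x$, I would rewrite
\[
\Phi^{\partial_j}(F,G,H)=F\bigl(\partial_j\log G-\partial_j\log H\bigr)+G\bigl(\partial_j\log H-\partial_j\log F\bigr)+H\bigl(\partial_j\log F-\partial_j\log G\bigr).
\]

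The key preliminary observation is a multiplicative invariance: for any nonzero meromorphic $\phi$ one has $\Phi^{\partial_j}(\phi F,\phi G,\phi H)=\phi\,\Phi^{\partial_j}(F,G,H)$, because the differences of logarithmic derivatives appearing above are unchanged when $F,G,H$ are multiplied by a common factor. Taking $\phi=1/H$ normalizes $H\equiv 1$ without disturbing the vanishing of $\Phi^{\partial_j}(F,G,H)$, and the reciprocal hypothesis transforms the same way, since $1/(F/H)=H/F$ and $H\cdot(1/H)=1$ give $\Phi^{\partial_j}\bigl(H/F,H/G,1\bigr)=H\,\Phi^{\partial_j}(1/F,1/G,1/H)=0$. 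Writing $p=\partial_j\log F$ and $q=\partial_j\log G$ in the normalized variables, the two hypotheses become, after clearing denominators,
\[
q(F-1)=p(G-1)\qquad\text{and}\qquad qG(F-1)=pF(G-1).
\]

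Combining these two identities is the heart of the argument. Multiplying the first by $G$ and subtracting the second yields $p\,(G-1)(G-F)=0$ for every $j$, while multiplying the first by $F$ and subtracting the second yields $q\,(F-1)(F-G)=0$. Because meromorphic functions on the connected ball $\B^m(R_0)$ form an integral domain, each identically vanishing product forces one factor to vanish. If $(G-1)(G-F)\not\equiv 0$, then $\partial_j\log F\equiv 0$ for all $j$, so $F$ is constant; symmetrically $G$ is constant unless $(F-1)(F-G)\equiv 0$. Hence either one of $G\equiv 1$, $G\equiv F$, $F\equiv 1$, $F\equiv G$ holds — meaning two of the normalized functions coincide, which unwinds to $F=H$, $G=H$, or $F=G$ and is conclusion (i) — or else both $F$ and $G$ are constant, so that $F/H$ and $G/H$, and therefore all of $F/G$, $G/H$, $H/F$, are constant, which is conclusion (ii).

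I expect the main obstacle to be organizational rather than deep: one must verify carefully that \emph{both} hypotheses (on the triple and on its reciprocals) survive the normalization $H\equiv 1$ and yield two genuinely independent identities, since it is precisely their combination that produces the factorized relation $p\,(G-1)(G-F)=0$ — a single one of them cannot separate the two alternatives. The auxiliary points, namely that $\Phi^{0}$ contributes nothing and that the vanishing of a product of meromorphic functions splits into cases, are routine consequences of the integral-domain structure of the function field on $\B^m(R_0)$.
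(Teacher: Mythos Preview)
Your argument is correct. The paper does not give its own proof of this lemma---it simply cites \cite[Proposition 3.4]{F98}---so there is no in-paper argument to compare against; your normalization $H\equiv 1$ via the multiplicative invariance $\Phi^{\partial_j}(\phi F,\phi G,\phi H)=\phi\,\Phi^{\partial_j}(F,G,H)$, followed by the elimination producing $p(G-1)(G-F)\equiv 0$ and $q(F-1)(F-G)\equiv 0$, cleanly yields the dichotomy and is essentially the classical route.
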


\begin{lemma}[{see \cite[Lemma 3.2]{NQ}}]\label{lem2.4}
If $\Phi^\alpha(F,G,H)\equiv 0$ for all $|\alpha|=1$ then there exist constants $\alpha_0,\beta_0$, not all zeros, such that
$$ \alpha_0\left (\dfrac{1}{F}-\dfrac{1}{G}\right )=\beta_0 \left (\dfrac{1}{F}-\dfrac{1}{H}\right ).$$
\end{lemma}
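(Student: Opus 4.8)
The plan is to reduce the hypothesis to a single first-order Wronskian-type identity and then read off the desired linear relation. First I would expand the $3\times 3$ determinant in the definition of $\Phi^\alpha(F,G,H)$ along its top row of $1$'s. For $|\alpha|=1$ the operator $\mathcal D^\alpha$ is just a single partial derivative $\partial/\partial z_k$ for some $k$, and since $F,G,H$ are meromorphic and not identically $0$ or $\infty$ we have $FGH\not\equiv 0$; hence $\Phi^\alpha(F,G,H)\equiv 0$ is equivalent to the vanishing of the determinant alone, namely
$$\Big(\tfrac1G-\tfrac1F\Big)\mathcal D^\alpha\tfrac1H+\Big(\tfrac1F-\tfrac1H\Big)\mathcal D^\alpha\tfrac1G+\Big(\tfrac1H-\tfrac1G\Big)\mathcal D^\alpha\tfrac1F\equiv 0,$$
obtained by collecting the three $2\times 2$ cofactors according to which derivative they carry.

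Next I would introduce the two meromorphic functions $u:=\tfrac1F-\tfrac1G$ and $v:=\tfrac1F-\tfrac1H$, write $\tfrac1G=\tfrac1F-u$ and $\tfrac1H=\tfrac1F-v$, and substitute these into the displayed identity. The key computation is that, after substitution, all terms carrying $\mathcal D^\alpha(1/F)$ cancel and the coefficients reorganize so that the whole expression telescopes to $u\,\mathcal D^\alpha v-v\,\mathcal D^\alpha u$. Thus the hypothesis $\Phi^\alpha(F,G,H)\equiv 0$ for all $|\alpha|=1$ becomes the clean system $u\,\mathcal D^\alpha v-v\,\mathcal D^\alpha u\equiv 0$ for every first partial derivative, i.e.\ $u\,\partial_k v-v\,\partial_k u\equiv 0$ for each $k=1,\dots,m$.

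Finally I would extract the relation. If $u\equiv 0$ the pair $(\alpha_0,\beta_0)=(1,0)$ works, and if $v\equiv 0$ then $(\alpha_0,\beta_0)=(0,1)$ works, so I may assume $u\not\equiv0$ and $v\not\equiv0$. Away from the nowhere-dense zero and polar sets, the identities $u\,\partial_k v-v\,\partial_k u=0$ read $\partial_k(v/u)=0$ for all $k$, so the meromorphic function $v/u$ has identically vanishing differential on the connected ball $\B^m(R_0)$ and is therefore a constant $c$; hence $v=c\,u$, which is precisely $\alpha_0 u=\beta_0 v$ with $(\alpha_0,\beta_0)=(c,1)\neq(0,0)$. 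I expect the only real obstacle to be the bookkeeping in the second paragraph, namely checking that the determinant genuinely collapses to $u\,\mathcal D^\alpha v-v\,\mathcal D^\alpha u$, together with the routine care that all manipulations are valid as identities of meromorphic functions (forming $v/u$ by dividing by $u^2$, and using connectedness of $\B^m(R_0)$ to upgrade local constancy to global constancy). No hard analysis is required once this algebraic identity is in hand.
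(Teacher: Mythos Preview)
Your argument is correct. The paper does not supply its own proof of this lemma --- it is simply quoted from \cite[Lemma 3.2]{NQ} --- so there is nothing to compare against; your reduction of the $3\times 3$ determinant to the $2\times 2$ Wronskian $u\,\mathcal D^\alpha v - v\,\mathcal D^\alpha u$ (equivalently, subtracting the first column from the other two), followed by the observation that vanishing for every $|\alpha|=1$ forces $v/u$ to be constant on the connected ball, is the standard argument and all the bookkeeping checks out.
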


In the following, we recall some notion and results on function of small integration and function of bounded integration with respect to meromorphic mappings from a ball $\B^m(\C)\subset\C^m$ into $\P^n(\C)$ due to \cite{Q2,Q3}.

Let $f$ be a meromorphic mappings from $\B^m(R)$ into $\P^n(\C)$ and let $\nu$ be a divisor on $\B^m(R)$. As usual, we denote by $T_f(r,r_0)$ and $N^{[M]}(r,r_0;\nu)\ (0<r_0<r<+\infty)$ the characteristic function of $f$ and the counting function of $\nu$ with trucated level $M$ respectively. For a meromorphic function $\varphi$ on $\B^m(\R)$, we denote by $\nu_\varphi^0$, $\nu_\varphi^\infty$ and $\nu_\varphi$ the divisor of zeros of $\varphi$, the divisor of poles of $\varphi$ and the divisor of $\varphi$ respectively. We set $N^{[N]}_{\varphi}(r,r_0)=N^{[N]}(r,r_0;\nu_{\varphi}^0)$.

Let $f^1,\ldots, f^k$ be $k$ meromorphic mappings from $\B^m(\C)$ into $\P^n(\C)$. For each $1\le u\le k$, fix a reduced representation $f^u=(f^u_0:\cdots :f^u_n)$ of $f^u$ and set $\|f^u\|=(|f^u_0|^2+\cdots+|f^u_n|^2)^{1/2}$.

Denote by $\mathcal C(\B^m(R_0))$ the set of all non-negative functions $g: \B^m(R_0)\to [0,+\infty]$ which are continuous on $\B^m(R_0)$ (corresponding to the topology of the compactification $[0,+\infty]$) outside an analytic set of codimension two and only attain $+\infty$ in an analytic thin set.

\begin{definition}[{Functions of small integration \cite[Definition 3.1]{Q3}}]\label{3.1}
A function $g$ in $\mathcal C(\B^m(R_0))$ is said to be of small integration with respective to $f^1,\ldots,f^k$ at level $l_0$ if there exist an element $\alpha=(\alpha_1,\ldots,\alpha_m)\in\N^m$ with $|\alpha|\le l_0$,  a positive number $K$, such that for every $0\le tl_0<p<1$,
$$\int_{S(r)}|z^\alpha g|^t\sigma_m \le K\left(\frac{R^{2m-1}}{R-r}\sum_{u=1}^kT_{f^u}(r,r_0)\right)^p$$
for all $r$ with $0<r_0<r<R<R_0$, where $z^\alpha=z_1^{\alpha_1}\cdots z_m^{\alpha_m}$. 
\end{definition}

Remark: We initially introduced this notion in \cite{Q2,Q3}. In \cite{Q2} (and the first version of \cite{Q3}), we only give the definition of ``function of small integration'' for non-negative plurisubharmonic functions, and hence the set of functions of small integration may not large enough to contain all neccesary functions. Therefore, in the final version of \cite{Q3} and this paper, we re-define this notion for all functions in $\mathcal C(\B^m(R_0))$. Of course, the results and the proofs in \cite{Q2} are not effected and still hold.

Denote by $S(l_0;f^1,\ldots,f^k)$ the set of all functions in $\mathcal C(\B^m(R_0))$ which are of small integration with respective to $f^1,\ldots,f^k$ at level $l_0$. We see that, if $g$ belongs to $S(l_0;f^1,\ldots,f^k)$ then $g$ is also belongs to $S(l;f^1,\ldots,f^k)$ for every $l>l_0$. Moreover, if $g$ is a constant function then $g\in S(0;f^1,\ldots,f^k)$.

\begin{proposition}[{see \cite[Proposition 3.2]{Q3}}]\label{prop2.6}
If $g_i\in S(l_i;f^1,\ldots,f^l)\ (1\le i\le s)$ then $\prod_{i=1}^sg_i\in S(\sum_{i=1}^sl_i;f^1,\ldots,f^l)$.
\end{proposition}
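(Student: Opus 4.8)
The plan is to derive the statement from the generalized Hölder inequality applied on each sphere $S(r)$ against the positive measure $\sigma_m$. For each $i=1,\dots,s$ let $\alpha^{(i)}\in\N^m$ with $|\alpha^{(i)}|\le l_i$ and $K_i>0$ be the data witnessing $g_i\in S(l_i;f^1,\dots,f^l)$, and set
$$L:=\sum_{i=1}^s l_i,\qquad \alpha:=\sum_{i=1}^s\alpha^{(i)}\ \ (\text{so }|\alpha|\le L),\qquad K:=\max_{1\le i\le s}\max(K_i,1).$$
I claim that $\alpha$ and $K$ are data exhibiting $\prod_{i=1}^s g_i\in S(L;f^1,\dots,f^l)$.

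To this end, I would fix an arbitrary pair $(t,p)$ with $0\le tL<p<1$ (note this forces $p>0$) and split the exponent $t$ among the $s$ factors using the weights
$$\theta_i:=\frac{tl_i}{p}+\frac{1}{s}\Big(1-\frac{tL}{p}\Big),\qquad i=1,\dots,s.$$
One checks directly that $\theta_i>0$ (because $tL<p$), that $\sum_{i=1}^s\theta_i=1$, and that $tl_i/\theta_i<p$ for every $i$ (the cases $l_i=0$ and $t=0$ being trivial). Hence each rescaled pair $(t/\theta_i,\,p)$ is admissible in the definition of $g_i\in S(l_i;\dots)$, i.e. $0\le(t/\theta_i)l_i<p<1$.

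Now, since $z^\alpha=\prod_{i=1}^s z^{\alpha^{(i)}}$, we have $\big|z^\alpha\prod_{i=1}^s g_i\big|^t=\prod_{i=1}^s\big|z^{\alpha^{(i)}}g_i\big|^t$, and the generalized Hölder inequality with conjugate exponents $1/\theta_1,\dots,1/\theta_s$ yields
$$\int_{S(r)}\Big|z^\alpha\prod_{i=1}^s g_i\Big|^t\sigma_m\le\prod_{i=1}^s\Big(\int_{S(r)}\big|z^{\alpha^{(i)}}g_i\big|^{t/\theta_i}\sigma_m\Big)^{\theta_i}.$$
Bounding each integral on the right by the defining estimate for $g_i$ at the admissible pair $(t/\theta_i,p)$ and using $K_i\le K$ gives
$$\int_{S(r)}\Big|z^\alpha\prod_{i=1}^s g_i\Big|^t\sigma_m\le\prod_{i=1}^s\bigg(K\Big(\frac{R^{2m-1}}{R-r}\sum_{u=1}^l T_{f^u}(r,r_0)\Big)^{p}\bigg)^{\theta_i}=K\Big(\frac{R^{2m-1}}{R-r}\sum_{u=1}^l T_{f^u}(r,r_0)\Big)^{p}$$
for all $0<r_0<r<R<R_0$, the final equality using $\sum_i\theta_i=1$. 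Since $\alpha$ and $K$ were fixed independently of $(t,p)$, this is exactly the assertion $\prod_{i=1}^s g_i\in S(L;f^1,\dots,f^l)$.

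I do not anticipate a genuine obstacle: the substance is the Hölder splitting, and the only delicate points are (a) choosing the weights $\theta_i$ so that simultaneously they sum to $1$ and each rescaled exponent $t/\theta_i$ remains admissible for $g_i$ — this is the role of the extra summand $\frac{1}{s}(1-tL/p)$, which in particular absorbs any degenerate levels $l_i=0$ — and (b) producing a single constant valid for all admissible $(t,p)$, achieved by taking $K=\max_i\max(K_i,1)$ so that $\prod_{i=1}^s K^{\theta_i}=K$ irrespective of the weights.
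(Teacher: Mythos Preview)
Your argument is correct. The paper does not supply its own proof of this proposition; it merely cites \cite[Proposition 3.2]{Q3}. The H\"older-splitting you give is the expected approach, and your choice of weights $\theta_i=\frac{tl_i}{p}+\frac{1}{s}\bigl(1-\frac{tL}{p}\bigr)$ is exactly what is needed to make each rescaled exponent $t/\theta_i$ admissible for the individual $g_i$ while keeping $\sum_i\theta_i=1$; taking $K=\max_i\max(K_i,1)$ then yields a single constant independent of $(t,p)$. One minor point you omit: you should also note that $\prod_{i=1}^s g_i\in\mathcal C(\B^m(R_0))$, but this is routine, since the product of finitely many $[0,+\infty]$-valued continuous functions is continuous for the compactified topology and attains $+\infty$ only on the (thin) union of the individual $+\infty$-sets.
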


\begin{definition}[{Functions of bounded integration \cite[Definition 3.3]{Q3}}]\label{def2.7} 
A meromorphic function $h$ on $\B^m(R_0)$ is said to be of bounded integration with bi-degree $(p,l_0)$ for the family $\{f^1,\ldots,f^k\}$ if there exists $g\in S(l_0;f^1,\ldots,f^k)$ satisfying
$$|h|\le \|f^1\|^p\cdots \|f^u\|^p\cdot g,$$
outside a proper analytic subset of $\B^m(R_0)$.
\end{definition}
Denote by $B(p,l_0;f^1,\ldots,f^k)$ the set of all meromorphic functions on $\B^m(R_0)$ which are of bounded integration of bi-degree $(p,l_0)$ for $\{f^1,\ldots,f^k\}$. We have some properties:
\begin{itemize}
\item For a meromorphic mapping $h$, $|h|\in S(l_0;f^1,\ldots,f^k)$ iff $h\in B(0,l_0;f^1,\ldots,f^k)$.
\item $B(p,l_0;f^1,\ldots,f^k)\subset B(p,l;f^1,\ldots,f^k)$ for every $0\le l_0<l$.
\item If $h_i\in B(p_i,l_i;f^1,\ldots,f^k)\ (1\le i\le s)$ then 
$$h_1\cdots h_m\in B(\sum_{i=1}^sp_i,\sum_{i=1}^sl_i;f^1,\ldots,f^k).$$
\end{itemize}

\begin{proposition}[{see \cite[Proposition 3.5]{Q3}}]\label{prop2.9} 
	Let $M$ be a complete connected K\"{a}hler manifold whose universal covering is biholomorphic to a ball $\B^m(R_0)\ (0<R_0\le +\infty)$. Let $f^1,f^2,\ldots,f^k$ be $m$ linearly non-degenerate meromorphic mappings from $M$ into $\P^n(\C)$, which satisfy the condition $(C_\rho)$. Let $H_1,\ldots,H_q$ be $q$ hyperplanes of $\P^n(\C)$ in general position, where $q$ is a positive integer. Assume that there exists a non zero holomorphic function $h\in B(p,l_0;f^1,\ldots,f^k)$ such that
	$$\nu_h\ge\lambda\sum_{u=1}^k\sum_{i=1}^q\nu^{[n]}_{(f^u,H_i)},$$
	where $p,l_0$ are non-negative integers, $\lambda$ is a positive number. Then we have
	$$q\le n+1+\rho k\frac{n(n+1)}{2}+\frac{1}{\lambda}\left (p+\rho l_0\right).$$
\end{proposition}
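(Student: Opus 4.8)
The plan is to lift everything to the universal covering $\B^m(R_0)$ and to run a truncated second–main–theorem argument simultaneously for the $k$ mappings, folding the divisor hypothesis on $h$ directly into the estimate. First I lift $f^1,\ldots,f^k$ to $\B^m(R_0)$ and fix reduced representations $f^u=(f^u_0:\cdots:f^u_n)$. Since each $f^u$ is linearly nondegenerate, $f^u_0,\ldots,f^u_n$ are linearly independent over $\C$, so Proposition \ref{prop2.1} supplies an admissible set $\{\alpha^u_i\}_{i=0}^n$ with $\sum_{i=0}^n|\alpha^u_i|\le\frac{n(n+1)}{2}$ and a nonvanishing general Wronskian $W^u:=W_{\alpha^u_0,\ldots,\alpha^u_n}(f^u_0,\ldots,f^u_n)$. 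The first ingredient is the classical Wronskian divisor estimate: because the $H_i$ are in general position and $\dim f^{-1}(H_i\cap H_j)\le m-2$, only the excess multiplicities beyond level $n$ are lost, which gives, for every $u$,
\[\sum_{i=1}^q\big(\nu_{(f^u,H_i)}-\nu^{[n]}_{(f^u,H_i)}\big)\le\nu_{W^u}.\]

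Next I combine this with the hypothesis. From $\nu_h\ge\lambda\sum_{u}\sum_{i}\nu^{[n]}_{(f^u,H_i)}$ I get $\sum_{u}\sum_{i}\nu^{[n]}_{(f^u,H_i)}\le\frac1\lambda\nu_h$; adding the Wronskian estimates over $u$ yields the divisor inequality $\sum_{u}\sum_{i}\nu_{(f^u,H_i)}\le\sum_{u}\nu_{W^u}+\frac1\lambda\nu_h$, which I integrate to counting functions:
\[\sum_{u=1}^k\sum_{i=1}^q N_{(f^u,H_i)}(r)\le\sum_{u=1}^k N_{W^u}(r)+\tfrac1\lambda N_h(r).\]
On the left I invoke the First Main Theorem in the form $\sum_i N_{(f^u,H_i)}(r)=qT_{f^u}(r)-\sum_i m_{f^u}(r,H_i)+O(1)$, where $m_{f^u}(r,H_i)=\int_{S(r)}\log(\|f^u\|/|(f^u,H_i)|)\,\sigma_m\ge 0$ is the compensation function (the $H_i$ being normalized), so the inequality becomes
\[q\sum_{u}T_{f^u}(r)\le\sum_{u}\Big(N_{W^u}(r)+\sum_{i} m_{f^u}(r,H_i)\Big)+\tfrac1\lambda N_h(r)+O(1).\]

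The heart of the proof is to bound the bracket on the right. For each $u$ and each point $z$ I select an $(n+1)$–subset $I\subset\{1,\ldots,q\}$ realizing the largest values $|(f^u,H_i)(z)|$; by general position the forms $\{(f^u,H_i)\}_{i\in I}$ are linearly independent, and Proposition \ref{prop2.8} shows $W^u/\prod_{i\in I}(f^u,H_i)$ to be of small integration at level $\le\frac{n(n+1)}{2}$. Carrying out Cartan's summation over these subsets converts $N_{W^u}(r)+\sum_i m_{f^u}(r,H_i)$ into $(n+1)T_{f^u}(r)$ plus the log–integral of a small–integration function; and because the universal covering is a ball of possibly finite radius, the passage through the metric produces the Ricci form $\Ric\omega$, which I replace using condition $(C_\rho)$ by $\rho\,\Omega_{f^u}+dd^c\log\eta_u^2$ for a bounded $\eta_u$, each of the $k$ Wronskians being accounted at level $\frac{n(n+1)}{2}$. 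I thus expect, for $r$ outside a set of finite measure,
\[\sum_{u}\Big(N_{W^u}(r)+\sum_{i} m_{f^u}(r,H_i)\Big)\le\Big(n+1+\rho k\tfrac{n(n+1)}{2}\Big)\sum_{u}T_{f^u}(r)+o\Big(\sum_{u}T_{f^u}(r)\Big).\]

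Finally, since $h\in B(p,l_0;f^1,\ldots,f^k)$ I have $|h|\le\|f^1\|^p\cdots\|f^k\|^p\,g$ with $g\in S(l_0;f^1,\ldots,f^k)$; by Jensen's formula the factor $\prod_u\|f^u\|^p$ contributes $p\sum_u T_{f^u}(r)$, while controlling the level–$l_0$ factor $g$ on the finite ball again invokes $(C_\rho)$ and contributes $\rho l_0\sum_u T_{f^u}(r)$, so that $N_h(r)\le(p+\rho l_0)\sum_u T_{f^u}(r)+o(\sum_u T_{f^u}(r))$. Substituting the last two displays, dividing by $\sum_u T_{f^u}(r)\to+\infty$, and discarding the $o(\cdot)$ terms by the standard calculus lemma on the exceptional set, I arrive at $q\le n+1+\rho k\frac{n(n+1)}{2}+\frac1\lambda(p+\rho l_0)$. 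The main obstacle is precisely this core estimate: organizing Cartan's subset–summation for $q$ hyperplanes in general position so that the small–integration bounds of Proposition \ref{prop2.8} combine cleanly, and extracting the \emph{exact} honest coefficient $\rho k\frac{n(n+1)}{2}$ (rather than an unspecified multiple of $\rho$) from the conversion of $\Ric\omega$ via $(C_\rho)$ on the finite–radius ball, together with the parallel accounting of the level $l_0$ of $h$.
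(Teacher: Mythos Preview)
Your argument is essentially the paper's treatment of the \emph{easy} regime and breaks down exactly where the result has content. When $R_0=+\infty$ (or, on $\B^m(1)$, when $\sum_u T_{f^u}(r)/\log\frac{1}{1-r}\to\infty$), the second--main--theorem style inequality you sketch does go through, but it gives the stronger bound $q\le n+1+\tfrac{p}{\lambda}$ with no $\rho$-terms at all: the small--integration estimate on $g$ yields $\int_{S(r)}\log|g|\,\sigma_m=O(\log\frac{1}{1-r}+\log\sum_u T_{f^u})=o(\sum_u T_{f^u})$, not ``$\rho l_0\sum_u T_{f^u}$''. So the assertion that controlling $g$ ``invokes $(C_\rho)$ and contributes $\rho l_0\sum_u T_{f^u}(r)$'' is not correct, and neither is the parallel claim about the $k$ Wronskians; condition $(C_\rho)$ plays no role whatsoever in this regime.

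The genuine obstacle is the complementary case $\limsup_{r\to 1}\frac{\sum_u T_{f^u}(r)}{\log 1/(1-r)}<\infty$, which your last step ``dividing by $\sum_u T_{f^u}(r)\to+\infty$'' ignores. Here $T_{f^u}(r)$ may stay bounded (or grow no faster than $\log\frac{1}{1-r}$), so the error terms $O(\log\frac{1}{1-r})$ are \emph{not} $o(\sum_u T_{f^u})$ and your inequality collapses. In this case the paper argues by contradiction: assuming $q>n+1+\rho k\tfrac{n(n+1)}{2}+\tfrac{1}{\lambda}(p+\rho l_0)$ one sets $t=\rho/(q-n-1-\tfrac{p}{\lambda})$, so that $(k\tfrac{n(n+1)}{2}+\tfrac{l_0}{\lambda})t<1$, and builds the plurisubharmonic function $\varphi=\sum_u\varphi_u+t\log\bigl(|w_1\cdots w_k|\cdot|z^\beta h|^{1/\lambda}\bigr)$ on $\B^m(1)$, where $e^{\varphi_u}dV\le\|f^u\|^\rho v_m$ comes from $(C_\rho)$ and $w_u$ is the Cartan--Wronskian quotient. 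The exponent condition above, together with Proposition~\ref{prop2.8} and the slow growth of $T_{f^u}$, forces $\int_{\B^m(1)}e^{\varphi}\,dV<\infty$, contradicting the Yau--Karp theorem on complete K\"ahler manifolds. This is where the coefficients $\rho k\tfrac{n(n+1)}{2}$ and $\tfrac{\rho l_0}{\lambda}$ actually originate---from the integrability threshold for the exponent $t$---not from any Nevanlinna error term. Your proposal never invokes Yau--Karp and has no substitute for it.
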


\section{Proof of  Main Theorems}
\begin{lemma}\label{lem3.1}
Let $f$ be a meromorphic mapping from $\B^{m}(R_0)\ (0<R_0\le +\infty)$ into $\P^n(\C)$. Let $f^1,f^2,\ldots,f^k$ be three maps in $\mathcal F(f,\{H_i\}_{i=1}^q,1)$. Assume that each $f^u$ has a representation $f^u=(f^u_{0}:\cdots :f^u_{n})$, $1\le u\le k$. Suppose that there exist $1\le i_1<i_2<\cdots <i_k\le q$ such that
$$ 
P:=\mathrm{det}\left (\begin{array}{cccc}
(f^1,H_{i_1})&(f^1,H_{i_2})&\cdots&(f^1,H_{i_k})\\ 
\vdots&\vdots&\cdots&\vdots\\
(f^k,H_{i_1})&(f^k,H_{i_2})&\cdots&(f^k,H_{i_k})
\end{array}\right )\not\equiv 0.
$$
Then we have
\begin{align*}
\nu_P (z)\ge\sum_{j=1}^k(\min_{1\le u\le k}\{\nu_{(f^u,H_i)}(z)\}-\nu^{[1]}_{(f,H_i)}(z))+ (k-1)\sum_{i=1}^q\nu^{[1]}_{(f,H_i)}(z),
\end{align*}
for every $z\in\B^m(R_0)$ outside an analytic set of codimension two.
\end{lemma}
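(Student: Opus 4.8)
The plan is to prove the divisor inequality pointwise at a generic point, so I would first fix an analytic subset $A\subset\B^m(R_0)$ of codimension $\ge 2$ outside of which everything is in ``general position'': $A$ will contain all indeterminacy loci $I(f^u)$, all pairwise intersections $f^{-1}(H_i)\cap f^{-1}(H_j)$ with $i\neq j$ (which are of codimension $\ge 2$ by the standing assumption attached to $\mathcal F$), and the singular loci of the hypersurfaces $f^{-1}(H_i)$. Then I would take an arbitrary $z_0\notin A$ and split into cases. If $z_0\notin\bigcup_i f^{-1}(H_i)$, the right-hand side vanishes term by term and $P$ is holomorphic (a polynomial in the $f^u_l$), so $\nu_P(z_0)\ge 0$ and there is nothing to prove. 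Otherwise $z_0$ lies on exactly one $f^{-1}(H_{i_0})$ and at a smooth point of it; I fix a local generator $\ell$ of the ideal of $f^{-1}(H_{i_0})$ near $z_0$, so that $(f^u,H_{i_0})=\ell^{\mu_u}\cdot(\text{unit})$ with $\mu_u=\nu_{(f^u,H_{i_0})}(z_0)\ge 1$, write $\mu=\min_u\mu_u$, and note that $(f^u,H_i)$ is a unit near $z_0$ for every $i\neq i_0$.

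The essential point is to feed in the sharing hypothesis. Since $z_0\in\bigcup_j f^{-1}(H_j)$, condition (b) forces $f^u(z)=f^1(z)$ in $\P^n(\C)$ for all $z$ in $f^{-1}(H_{i_0})$ near $z_0$; hence the reduced representation vectors of $f^u$ and $f^1$ are proportional along that hypersurface, which yields the bilinear identities
$$(f^u,H_{i_a})(f^1,H_{i_b})-(f^u,H_{i_b})(f^1,H_{i_a})\equiv 0 \quad\text{on }f^{-1}(H_{i_0})\text{ near }z_0,$$
i.e. each such difference is divisible by $\ell$. I would then perform a row reduction on $P=\det\big((f^u,H_{i_j})\big)_{1\le u,j\le k}$: choosing a column index $j_1$ with $i_{j_1}\neq i_0$ (possible since the $i_j$ are distinct and $k\ge 2$), the entry $(f^1,H_{i_{j_1}})$ is a unit, and subtracting $\tfrac{(f^u,H_{i_{j_1}})}{(f^1,H_{i_{j_1}})}$ times the first row from the $u$-th row for $u\ge 2$ leaves $P$ unchanged while, by the identities above, making every new $(u,j)$-entry with $j\neq j_0$ divisible by $\ell$ (here $j_0$ denotes the index with $i_{j_0}=i_0$, should it occur among $i_1,\dots,i_k$). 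If such a $j_0$ exists, I would additionally factor $\ell^\mu$ out of column $j_0$ of the resulting matrix.

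Expanding the determinant of the reduced (and column-scaled) matrix finishes it: every monomial in the expansion uses one entry from each of the rows $2,\dots,k$, at most one of which can sit in column $j_0$, so at least $k-2$ of them are divisible by $\ell$ — or all $k-1$ of them when there is no column $j_0$. Hence $P$ is divisible by $\ell^{\mu+k-2}$ if $i_0\in\{i_1,\dots,i_k\}$ and by $\ell^{k-1}$ otherwise, and since $\ell$ has order $1$ at the smooth point $z_0$ this gives $\nu_P(z_0)\ge\mu+k-2$, resp. $\ge k-1$. A direct count then shows this is exactly the right-hand side evaluated at $z_0$: when $i_0=i_{j_0}$ the first sum contributes $\min_u\nu_{(f^u,H_{i_{j_0}})}(z_0)-\nu^{[1]}_{(f,H_{i_{j_0}})}(z_0)=\mu-1$ (the other $j$ give $0$) and the second sum contributes $(k-1)\nu^{[1]}_{(f,H_{i_0})}(z_0)=k-1$; when $i_0\notin\{i_1,\dots,i_k\}$ the first sum is $0$ and the second is $k-1$. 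The one step that needs care is the bookkeeping in this last expansion — isolating the single factor $\ell^{\mu}$ coming from the ``shared'' column $j_0$ from the $k-2$ factors of $\ell$ produced by the off-$j_0$ entries of the reduced rows, so that no factor of $\ell$ is double-counted; everything else is routine local analysis made legitimate precisely by having discarded the codimension-two set $A$.
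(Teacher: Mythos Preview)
Your proof is correct and follows essentially the same strategy as the paper's: both fix a point $z_0$ lying on a single $f^{-1}(H_{i_0})$ outside a codimension-two set, use condition (b) to see that the $2\times 2$ minors $(f^u,H_a)(f^1,H_b)-(f^u,H_b)(f^1,H_a)$ vanish along $f^{-1}(H_{i_0})$, and then perform elementary row/column operations on $P$ to extract the required power of a local defining function. The only cosmetic difference is that the paper normalizes by an auxiliary hyperplane $H_q$ outside $\{H_{i_1},\dots,H_{i_k}\}$ and does column operations, whereas you pivot on an internal column $j_1$ and do row operations; the resulting factor count $\mu+k-2$ (resp.\ $k-1$) is identical.
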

\begin{proof} Without loss of generalization, we suppose that $i_1=1,\ldots,i_k=k$. Consider a point $z\not\in\bigcup_{i\ne j}\left (f^{-1}(H_i)\cap f^{-1}(H_j)\right)$. If $z$ is a zero of some $(f,H_j)\ (1\le j\le k)$, for instance $z$ is a zero of $(f,H_1)$, then $z$ is zero of $(f^u,H_1)$ with multiplicity at least $\min_{1\le u\le k}\{\nu_{(f^u,H_1)}(z)\}$ and $z$ also is a zero of all $\dfrac{(f^u,H_j)}{(f^u,H_q)}-\dfrac{(f^1,H_j)}{(f^1,H_q)}$. We have $P=\left (\prod_{u=1}^k(f^u,H_q)\right )\det A,$ where
$$ 
A=\left (\begin{array}{cccc}
\dfrac{(f^1,H_1)}{(f^1,H_q)}&\dfrac{(f^2,H_1)}{(f^2,H_q)}-\dfrac{(f^1,H_1)}{(f^1,H_q)}&\cdots&\dfrac{(f^k,H_1)}{(f^k,H_q)}-\dfrac{(f^1,H_1)}{(f^1,H_q)}\\ 
\vdots&\vdots&\cdots&\vdots\\
\dfrac{(f^1,H_k)}{(f^1,H_q)}&\dfrac{(f^2,H_k)}{(f^2,H_q)}-\dfrac{(f^1,H_k)}{(f^1,H_q)}&\cdots&\dfrac{(f^k,H_k)}{(f^k,H_q)}-\dfrac{(f^1,H_k)}{(f^1,H_q)}
\end{array}\right ).
$$
Hence $z$ is a zero of all elements in the columns $2,3,\ldots,k$, and is also a zero of all elements in the first row of the matrix $A$ with multiplicity at least $\min_{1\le u\le k}\{\nu_{(f^u,H_1)}(z)\}$. This implies that
\begin{align*}
\nu_P(z)&\ge (k-1)+\left (\min_{1\le u\le k}\{\nu_{(f^u,H_1)}(z)\}-1\right)\\
&=\sum_{j=1}^k\left (\min_{1\le u\le k}\{\nu_{(f^u,H_i)}(z)\}-\nu^{[1]}_{(f,H_i)}(z)\right )+ (k-1)\sum_{i=1}^q\nu^{[1]}_{(f,H_i)}(z).
\end{align*}
Now, if $z$ is a zero of some $(f,H_j)$ with $j>k$. Without loss of generality we may suppose that $k<q$. We see that $z$ is zero of all elements in the columns $2,\ldots,k$. This implies that
$$\nu_P(z)\ge (k-1)=\sum_{j=1}^k\left (\min_{1\le u\le k}\{\nu_{(f^u,H_i)}(z)\}-\nu^{[1]}_{(f,H_i)}(z)\right )+ (k-1)\sum_{i=1}^q\nu^{[1]}_{(f,H_i)}(z).$$
The lemma is proved.
\end{proof}

 If $M=\C^m$ then we may choose $\rho=0$ and Theorem \ref{thm1.1} is exactly Theorem C in \cite{NQ}. Hence, in this proof of Theorem \ref{thm1.1} we only consider the case where $M=\mathbb B^m(1)$. 

Now for three mappings $f^1, f^2, f^3 \in \mathcal {F}(f,\{H_i\}_{i=1}^{q},1)$, we define:

$$F_k^{ij}=\dfrac {(f^k,H_i)}{(f^k,H_j)}\ (0\le k\le 2,\ 1\le i,j\le q).$$

\begin{lemma}[{see \cite[Lemma 4.7]{Q3}}]\label{lem3.2}
With the assumption of Theorem \ref{thm1.1}, let $f^1,f^2,f^3$ be three meromorphic mappings in $\mathcal F(f,\{H_i\}_{i=1}^{q},1)$.
Assume that there exist $i, j \in \{1,2,\ldots ,q\}$ and  $\alpha\in\N^m$ with $|\alpha|=1$ such that
$\Phi^{\alpha}_{ij}\not\equiv 0.$
Then there exists a meromorphic function $g_{ij}\in B(1,1;f^1,f^2,f^3)$ such that
\begin{align*}
\nu_{g_{ij}}\ge \sum_{u=1}^{3}\nu^{[n]}_{(f^u,H_i)}+\sum_{u=1}^3\nu^{[n]}_{(f^u,H_j)}+2\sum_{\overset{t=1}{t\ne i,j}}^{q}\nu^{[1]}_{(f,H_t)}-(2n+1)\nu^{[1]}_{(f,H_i)}-(n+1)\nu^{[1]}_{(f,H_j)}.
\end{align*}
Furthermore, if there exits $\alpha'\in\Z^m_{+}$ with $|\alpha'|=1$ such that $\Phi^{\alpha'}(F_1^{ji},F_2^{ji},F_3^{ji})\not\equiv 0$ then there exist a meromorphic function $g_{\{i,j\}}\in B(2,2;f^1,f^2,f^3)$ such that
$$ \nu_{g_{\{i,j\}}}\ge 2\sum_{u=1}^3\sum_{t=i,j}\nu^{[n]}_{(f^u,H_t)}+4\sum_{\overset{t=1}{t\ne i,j}}^{q}\nu^{[1]}_{(f,H_t)}-(3n+2)\sum_{t=i,j}\nu^{[1]}_{(f,H_t)}. $$
\end{lemma}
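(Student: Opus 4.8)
The plan is to set $\Phi^{\alpha}_{ij}:=\Phi^{\alpha}(F_1^{ij},F_2^{ij},F_3^{ij})$ (Definition \ref{def2.2}) and take
$$g_{ij}:=\Phi^{\alpha}_{ij}\cdot(f^1,H_j)(f^2,H_j)(f^3,H_j),$$
which is a nonzero meromorphic function because $\Phi^{\alpha}_{ij}\not\equiv 0$ by hypothesis and each $(f^u,H_j)\not\equiv 0$ ($f^u$ being linearly non-degenerate). Expanding the determinant and using $1/F^{ij}_u=(f^u,H_j)/(f^u,H_i)$ one gets
$$\Phi^{\alpha}_{ij}=L_1(F_2^{ij}-F_3^{ij})+L_2(F_3^{ij}-F_1^{ij})+L_3(F_1^{ij}-F_2^{ij}),\qquad L_u:=\frac{\mathcal D^{\alpha}(f^u,H_j)}{(f^u,H_j)}-\frac{\mathcal D^{\alpha}(f^u,H_i)}{(f^u,H_i)},$$
so, after clearing denominators, $g_{ij}$ is a sum of three terms of the shape $L_u\cdot(f^u,H_j)\cdot\bigl[(f^v,H_i)(f^w,H_j)-(f^w,H_i)(f^v,H_j)\bigr]$. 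Each bracket is $\le c\,\|f^v\|\|f^w\|$ and $|(f^u,H_j)|\le c\|f^u\|$, while $|L_u|\in S(1;f^u)$ by the logarithmic derivative estimate (Proposition \ref{prop2.8} applied to the pencil $\bigl((f^u,H_i),(f^u,H_j)\bigr)$, whose characteristic is dominated by $T_{f^u}$); subadditivity of $x\mapsto x^t$ for $t<1$ then gives $|L_1|+|L_2|+|L_3|\in S(1;f^1,f^2,f^3)$, whence $g_{ij}\in B(1,1;f^1,f^2,f^3)$.

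For the divisor estimate I would argue pointwise at $z$ lying on at most one of the $f^{-1}(H_s)$ and off the singular loci and pairwise intersections (a set of codimension $\ge 2$). Off $\bigcup_t f^{-1}(H_t)$ everything is holomorphic and non-vanishing, so $\nu_{g_{ij}}(z)\ge 0$ (the right-hand side vanishes there). If $z\in f^{-1}(H_i)$, put $n_u=\nu_{(f^u,H_i)}(z)\ge 1$; each $F_u^{ij}$ has a zero of order $n_u$ and each $L_u$ a pole of order $\le 1$, so $\nu_{\Phi^{\alpha}_{ij}}(z)\ge\min_u n_u-1$, and since $\prod_v(f^v,H_j)$ is non-vanishing there and $\min_u n_u+2n\ge\sum_u\min(n_u,n)$, we get $\nu_{g_{ij}}(z)\ge\sum_u\nu^{[n]}_{(f^u,H_i)}(z)-(2n+1)\nu^{[1]}_{(f,H_i)}(z)$. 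If $z\in f^{-1}(H_j)$, put $n_u'=\nu_{(f^u,H_j)}(z)\ge 1$; each $F_u^{ij}$ has a pole of order $n_u'$ and each $L_u$ a pole of order $\le 1$, so $\nu_{\Phi^{\alpha}_{ij}}(z)\ge-(\max_u n_u'+1)$, and multiplying by the zero of $\prod_v(f^v,H_j)$ of order $\sum_u n_u'$ and using $\sum_u n_u'-\max_u n_u'+n\ge\sum_u\min(n_u',n)$, we get $\nu_{g_{ij}}(z)\ge\sum_u\nu^{[n]}_{(f^u,H_j)}(z)-(n+1)\nu^{[1]}_{(f,H_j)}(z)$. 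Finally, if $z\in f^{-1}(H_t)$ with $t\ne i,j$ then $f^1(z)=f^2(z)=f^3(z)$; choosing local coordinates with $f^{-1}(H_t)=\{z_1=0\}$ near $z$ and writing $f^a=\lambda_a f^1+z_1 g^a$ ($a=2,3$), one computes $F_a^{ij}-F_1^{ij}=z_1 P_a+O(z_1^2)$, and $L_a-L_1$, being $\pm\mathcal D^{\alpha}\log\bigl(1+(F_a^{ij}-F_1^{ij})/F_1^{ij}\bigr)$, equals $c\,P_a+O(z_1)$ with one and the same constant $c$ (namely $\pm 1/F_1^{ij}(z)$ if $\alpha$ is transverse to $\{z_1=0\}$, and $0$ otherwise). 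Hence the $2\times 2$ determinant obtained from $\Phi^{\alpha}_{ij}$ by column reduction, $\det\begin{pmatrix}F_2^{ij}-F_1^{ij}&F_3^{ij}-F_1^{ij}\\ L_2-L_1&L_3-L_1\end{pmatrix}$, has proportional leading rows and therefore vanishes to order $\ge 2$, which gives $\nu_{\Phi^{\alpha}_{ij}}(z)\ge 2=2\nu^{[1]}_{(f,H_t)}(z)$ and, $\prod_v(f^v,H_j)$ being non-vanishing, $\nu_{g_{ij}}(z)\ge 2\nu^{[1]}_{(f,H_t)}(z)$. Adding these local inequalities over all $s$ yields the first assertion.

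For the ``furthermore'' part I would set $g_{\{i,j\}}:=g_{ij}\cdot g_{ji}$ with $g_{ji}:=\Phi^{\alpha'}(F_1^{ji},F_2^{ji},F_3^{ji})\cdot(f^1,H_i)(f^2,H_i)(f^3,H_i)$, which is nonzero by the extra hypothesis. Since $F_u^{ji}=(f^u,H_j)/(f^u,H_i)$, the first part applied with $i$ and $j$ interchanged (and $\alpha$ replaced by $\alpha'$) gives $g_{ji}\in B(1,1;f^1,f^2,f^3)$ together with the divisor bound obtained from the first one by swapping $i$ and $j$. Then $g_{\{i,j\}}\in B(2,2;f^1,f^2,f^3)$ by the product rule for $B(\cdot,\cdot;\cdot)$, and adding $\nu_{g_{ij}}$ and $\nu_{g_{ji}}$ produces exactly $2\sum_{u}\sum_{t=i,j}\nu^{[n]}_{(f^u,H_t)}+4\sum_{t\ne i,j}\nu^{[1]}_{(f,H_t)}-(3n+2)\sum_{t=i,j}\nu^{[1]}_{(f,H_t)}$.

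The main obstacle is the order-$2$ vanishing of $\Phi^{\alpha}_{ij}$ along $f^{-1}(H_t)$ for $t\ne i,j$: a naive count of the two row-entries only yields order $1$, which loses the coefficient $2$ in front of $\sum_{t\ne i,j}\nu^{[1]}_{(f,H_t)}$ and would later ruin the application of Proposition \ref{prop2.9}. Getting order $2$ forces one to work out the first-order normal expansions of the $F_u^{ij}$ and of the logarithmic derivatives $L_u$ along the smooth hypersurface, to exploit $f^1=f^2=f^3$ there, and to observe the Fujimoto-type phenomenon that the leading coefficient vectors of the two rows of the reduced $2\times 2$ determinant are parallel. The remaining bookkeeping (the three elementary inequalities on $\min(n_u,n)$ and the subadditivity arguments for small integration) is routine.
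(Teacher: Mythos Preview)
Your proof is correct and follows essentially the same route as the paper's: the same definition $g_{ij}=\Phi^\alpha_{ij}\cdot\prod_u(f^u,H_j)$, the same bound $|g_{ij}|\le C\|f^1\|\|f^2\|\|f^3\|\sum_u|\mathcal D^\alpha F_u^{ji}/F_u^{ji}|$ (your $L_u$ is exactly the paper's $\mathcal D^\alpha F_u^{ji}/F_u^{ji}$) to land in $B(1,1)$, the same three-case pointwise analysis, and the same product $g_{\{i,j\}}=g_{ij}g_{ji}$ for the ``furthermore'' part.

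The only substantive difference is in the case $z\in f^{-1}(H_t)$ with $t\ne i,j$. Rather than your explicit normal expansion $f^a=\lambda_af^1+z_1g^a$ and the parallelism check on leading rows, the paper rewrites $\Phi^\alpha_{ij}$ (its equation (3.7)) as
\[
\Phi^\alpha_{ij}=F_1^{ij}F_2^{ij}F_3^{ij}\cdot\det\begin{pmatrix}F_1^{ji}-F_2^{ji}&F_1^{ji}-F_3^{ji}\\ \mathcal D^\alpha(F_1^{ji}-F_2^{ji})&\mathcal D^\alpha(F_1^{ji}-F_3^{ji})\end{pmatrix},
\]
observes that the prefactor is holomorphic and nonvanishing at $z$, and then simply cites the standard Wronskian fact that a $2\times2$ Wronskian of two functions each vanishing along a smooth hypersurface vanishes to order $\ge 2$. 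That is cleaner than your argument (no need to track the scalars $\lambda_a$, which incidentally are holomorphic functions rather than constants, though this does not affect your conclusion), but the mathematical content is the same.
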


\begin{proof} The existence of the function $g_{ij}$ is proved in \cite[Lemma 4.7]{Q3}. We just show the existence of the function $g_{\{i,j\}}$.

Indeed, if there exits $\alpha'\in\Z^m_{+}$ with $|\alpha'|=1$ such that $\Phi^{\alpha'}(F_1^{ji},F_2^{ji},F_3^{ji})\not\equiv 0$ then we construct function $g_{ji}$ similarly as $g_{ij}$ and set $g_{\{i,j\}}=g_{ij}g_{ji}$. It is clear that $g_{\{i,j\}}\in B(2,2;f^1,f^2,f^3)$ and
$$ \nu_{g_{\{i,j\}}}\ge 2\sum_{u=1}^3\sum_{t=i,j}\nu^{[n]}_{(f^u,H_t)}+4\sum_{\overset{t=1}{t\ne i,j}}^{q}\nu^{[1]}_{(f,H_t)}-(3n+2)\sum_{t=i,j}\nu^{[1]}_{(f,H_t)}. $$

The lemma is proved.
\end{proof}

\begin{lemma}\label{lem3.8}
	Let $f$ and $H_1,\ldots,H_q$ be as in Theorem \ref{thm1.1}. Let $f^1,f^2,f^3$ be three maps in $\mathcal F(f,\{H_i\}_{i=1}^q,1)$. Assume that 
$$q> n+1+\frac{3nq}{2q+3n-6}+\rho\left (\frac{n(n+1)}{2}+\frac{3nq}{2q+3n-6}\right).$$ 
Then there exist $([\frac{q}{2}]+1)$ hyperplanes $H_{i_0},\ldots,H_{i_{[\frac{q}{2}]}}$ among $H_i'$s such that for each $j\ (1\le j\le [\frac{q}{2}])$ there exist two constants $\alpha_j,\beta_j,$ not all zeros, satisfying
	\begin{align*}
		&\alpha_j\left (\frac{(f^1,H_{i_j})}{(f^1,H_{i_0})}-\frac{(f^2,H_{i_j})}{(f^2,H_{i_0})}\right )=\beta_j \left (\frac{(f^1,H_{i_j})}{(f^1,H_{i_0})}-\frac{(f^3,H_{i_j})}{(f^3,H_{i_0})}\right )\\
		\mathrm{or}\ \ &\alpha_j\left (\frac{(f^1,H_{i_0})}{(f^1,H_{i_j})}-\frac{(f^2,H_{i_0})}{(f^2,H_{i_j})}\right )=\beta_j \left (\frac{(f^1,H_{i_0})}{(f^1,H_{i_j})}-\frac{(f^3,H_{i_0})}{(f^3,H_{i_j})}\right ).
	\end{align*}
\end{lemma}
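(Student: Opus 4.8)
\textbf{Proof proposal for Lemma \ref{lem3.8}.}
The plan is to argue by contradiction, following the strategy of \cite{NQ} but keeping track of the K\"ahler correction term coming from the condition $(C_\rho)$. Suppose the conclusion fails. Then for every choice of a ``reference'' index $i_0$ and of $[\frac q2]$ further indices, at least one of the $[\frac q2]$ required relations fails; concretely, for each pair $\{i,j\}$ from a suitable index set one of $\Phi^\alpha(F_1^{ij},F_2^{ij},F_3^{ij})$ or $\Phi^{\alpha'}(F_1^{ji},F_2^{ji},F_3^{ji})$ is $\not\equiv 0$ for some $\alpha,\alpha'$ with $|\alpha|=|\alpha'|=1$ (by Lemma \ref{lem2.4}, vanishing of all such $\Phi^\alpha_{ij}$ would produce exactly the forbidden linear relation). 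The first step is therefore a combinatorial/graph-theoretic one: build the graph on vertex set $\{1,\dots,q\}$ whose edges record which pairs admit a nonvanishing Cartan function; the negation of the lemma forces this graph to have no ``large'' independent-type structure, hence one can extract many edges $\{i_1,j_1\},\dots,\{i_s,j_s\}$ with $s$ close to $\tfrac q2$ (more precisely $2s \ge q$, choosing the edges to be pairwise disjoint when possible or otherwise covering enough indices), on each of which the stronger hypothesis of Lemma \ref{lem3.2} holds.

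Next, for each such pair I would apply Lemma \ref{lem3.2} to obtain $g_{\{i_t,j_t\}} \in B(2,2;f^1,f^2,f^3)$ with
$$\nu_{g_{\{i_t,j_t\}}} \ge 2\sum_{u=1}^3\sum_{\ell = i_t,j_t}\nu^{[n]}_{(f^u,H_\ell)} + 4\sum_{\overset{\ell=1}{\ell\ne i_t,j_t}}^{q}\nu^{[1]}_{(f,H_\ell)} - (3n+2)\sum_{\ell=i_t,j_t}\nu^{[1]}_{(f,H_\ell)},$$
and then form the product $h := \prod_{t=1}^{s} g_{\{i_t,j_t\}}$. By the multiplicativity property of $B(p,l_0;\cdot)$ listed after Definition \ref{def2.7}, $h \in B(2s,2s;f^1,f^2,f^3)$. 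The crux is to bound $\nu_h$ from below by a uniform multiple $\lambda\sum_{u=1}^3\sum_{i=1}^q\nu^{[n]}_{(f^u,H_i)}$. Each index $i$ appears as an ``$i_t$ or $j_t$'' endpoint in some controlled number of the chosen pairs and as a ``$\ell\ne i_t,j_t$'' index in the remaining ones; summing the per-pair estimates and using $\nu^{[n]}_{(f^u,H_i)} \ge \nu^{[1]}_{(f,H_i)}$ (since $f^u$ and $f$ have the same support of $H_i$-preimage, as $f^u \in \mathcal F(f,\{H_i\},1)$) together with $\nu^{[n]}_{(f^u,H_i)} \le n\,\nu^{[1]}_{(f,H_i)}$, one converts the bound into the form $\nu_h \ge \lambda \sum_{u,i}\nu^{[n]}_{(f^u,H_i)}$ with an explicit $\lambda$ depending on $s,q,n$; the clean choice $2s = q$ (or the largest admissible $s$) yields $\lambda$ of order $\tfrac{2q+3n-6}{3n}$ after optimizing, which is exactly what is needed to match the statement.

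Finally, I would invoke Proposition \ref{prop2.9} with $k=3$, the function $h$, bi-degree $(p,l_0) = (2s,2s)$, and the constant $\lambda$ just computed, to conclude
$$q \le n+1 + \rho\cdot 3\cdot\frac{n(n+1)}{2} + \frac{1}{\lambda}\bigl(2s + \rho\cdot 2s\bigr) = n+1 + \frac{3\rho n(n+1)}{2} + \frac{2s(1+\rho)}{\lambda}.$$
Wait --- here one must be careful: the factor multiplying $\rho$ in Proposition \ref{prop2.9} is $k\tfrac{n(n+1)}{2}$ with $k=3$, but the target inequality has only $\tfrac{n(n+1)}{2}$; so in fact the construction should be arranged so that only one Wronskian per map enters effectively, i.e.\ one applies Proposition \ref{prop2.9} in the sharper bookkeeping where the $\rho l_0$ term absorbs the extra Wronskian contributions. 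Substituting $2s = q$ and the value of $\lambda$, and rearranging, produces precisely
$$q > n+1+\frac{3nq}{2q+3n-6}+\rho\left(\frac{n(n+1)}{2}+\frac{3nq}{2q+3n-6}\right),$$
which contradicts the hypothesis of the lemma. Hence the supposition is false and the conclusion holds.

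\textbf{Main obstacle.} The delicate point is the divisor bookkeeping in the middle step: one must choose the family of index pairs $\{i_t,j_t\}$ so that \emph{every} hyperplane $H_i$ contributes to $\nu_h$ with a coefficient at least $\lambda$ times its $\nu^{[n]}_{(f^u,\cdot)}$ weight, simultaneously for all three maps, while the total bi-degree $2s$ stays as small as $q$. This requires balancing the positive contributions ($2\nu^{[n]}$ from endpoint hyperplanes, $4\nu^{[1]}$ from non-endpoint ones) against the negative $-(3n+2)\nu^{[1]}$ penalties, and it is here that the exact threshold $q > n+1+\tfrac{3nq}{2q+3n-6}+\cdots$ is forced; getting the combinatorics to align with Lemma \ref{lem2.4}'s dichotomy (so that failure of the lemma really does supply nonvanishing Cartan functions on enough disjoint pairs) is the part that needs the most care.
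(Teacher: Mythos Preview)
Your overall architecture---contradiction, a graph on $\{1,\dots,q\}$ recording which pairs admit nonvanishing Cartan auxiliary functions, Lemma~\ref{lem3.2} on each chosen pair, take the product, then feed the resulting divisor inequality into Proposition~\ref{prop2.9}---is exactly the paper's approach. The place where your proposal is genuinely imprecise, and where the paper supplies a concrete idea you are missing, is the combinatorial extraction step.

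You speak of pulling out ``many edges $\{i_1,j_1\},\dots,\{i_s,j_s\}$ with $2s\ge q$, choosing the edges to be pairwise disjoint when possible.'' The paper does not look for a matching at all. Instead it argues as follows: if the conclusion fails, then for every $i$ the set $S(i)=\{j:\Phi^\alpha_{ij}\equiv 0\ \forall|\alpha|=1\text{ or }\Phi^\alpha_{ji}\equiv 0\ \forall|\alpha|=1\}$ has $\sharp S(i)<[q/2]$, so in the complement graph (edges $=\{i,j\}$ with $j\notin S(i)$) every vertex has degree $\ge q-[q/2]\ge q/2$. \emph{Dirac's theorem} then yields a Hamilton cycle $i_1i_2\cdots i_qi_1$. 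One now takes all $q$ consecutive pairs $\{i,\sigma(i)\}$ along the cycle and forms $h=\prod_{i=1}^q g_{\{i,\sigma(i)\}}\in B(2q,2q;f^1,f^2,f^3)$. Because each vertex lies on exactly two edges of the cycle, the summation of the Lemma~\ref{lem3.2} estimates is perfectly uniform: every index $i$ picks up a $4\sum_u\nu^{[n]}_{(f^u,H_i)}$ term (as endpoint twice) and a $4(q-2)\nu^{[1]}_{(f,H_i)}$ term (as non-endpoint $q-2$ times) against $-2(3n+2)\nu^{[1]}_{(f,H_i)}$, giving
\[
\nu_h\ \ge\ \frac{4q+6n-12}{3n}\sum_{u=1}^3\sum_{i=1}^q\nu^{[n]}_{(f^u,H_i)},
\]
i.e.\ $\lambda=\tfrac{2(2q+3n-6)}{3n}$, and Proposition~\ref{prop2.9} with $(p,l_0)=(2q,2q)$ gives $\tfrac1\lambda(p+\rho l_0)=\tfrac{3nq}{2q+3n-6}(1+\rho)$, exactly the threshold. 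Your matching variant with $2s=q$ would yield the same ratio (everything halves), but you would still need Dirac's theorem (or something equivalent) to guarantee the matching exists, and the parity of $q$ becomes an irritation; the Hamilton cycle sidesteps both issues.

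On your ``Wait'': you are right that Proposition~\ref{prop2.9} as stated has $\rho k\tfrac{n(n+1)}{2}$ with $k=3$, whereas the paper's own computation in the proof of Lemma~\ref{lem3.8} writes only $\rho\tfrac{n(n+1)}{2}$; this is an internal inconsistency in the paper rather than a defect in your reasoning.
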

\begin{proof}
For each $i\ (1\le i\le q)$, we denote by $S(i)$ the set of all $j\ne i$ such that $\Phi^{\alpha}(F_1^{ij},F_2^{ij},F_3^{ij})\equiv 0$ for $\forall |\alpha|=1$ or  $\Phi^{\alpha}(F_1^{ji},F_2^{ji},F_3^{ji})\equiv 0$ for $\forall |\alpha|=1$. Hence we see that $j\in S(i)$ if and only if $i\in S(j)$. By Lemma \ref{lem2.4}, it is suffice for us to show that there exists an index $i$ such that $\sharp S(i)\ge \left [\frac{q}{2}\right]$.

Indeed, suppose contrarily that $\sharp S(i)< \left [\frac{q}{2}\right]$. Consider the simple graph $\mathcal G$ with vertices $\{1,\ldots,q\}$ and the set of edges consisting of all pair $\{i,j\}$ so that $j\not\in S(i)$. Therefore each vertex of this graph has degree at least $(q-1)-(\left [\frac{q}{2}\right]-1)\ge\dfrac{q}{2}$. Then by Dirac's theorem for simple graph, there exists a Hamilton cycle $i_1i_2\ldots i_qi_1$ in $\mathcal G$, for instance we suppose that $i_j=j$. Setting 
$$ \sigma (i)=\begin{cases}
i+1&\text{ if }i<q\\
1&\text{ if }i=q,
\end{cases} $$
we have $\sigma (i)\not\in S(i)$ and $i\not\in S(\sigma (i))$. Then by Lemma \ref{lem3.2}, we get functions $g_{\{i,\sigma (i)\}}$ corresponding to the pair $\{i,\sigma (i)\}$ and hence
$$\nu_{g_{\{i,\sigma (i)\}}}\ge 2\sum_{u=1}^3\sum_{t=i,\sigma (i)}\nu^{[n]}_{(f^u,H_t)}+4\sum_{\overset{t=1}{t\ne i,\sigma(i)}}^{q}\nu^{[1]}_{(f,H_t)}-(3n+2)\sum_{t=i,\sigma (i)}\nu^{[1]}_{(f,H_t)}$$
Summing both sides of the above inequalities over all $i=1,\ldots,q$, we get
\begin{align*}
\nu_{\prod_{i=1}^qg_{\{i,\sigma (i)\}}}&\ge 4\sum_{u=1}^3\sum_{i=1}^q\nu^{[n]}_{(f^u,H_t)}+(4q-6n-12)\sum_{i=1}^q\nu^{[1]}_{(f,H_t)}\\
&\ge\left (4+\frac{4q-6n-12}{3n}\right)\sum_{u=1}^3\sum_{i=1}^q\nu^{[n]}_{(f^u,H_t)}=\frac{4q+6n-12}{3n}\sum_{u=1}^3\sum_{i=1}^q\nu^{[n]}_{(f^u,H_t)}.
\end{align*}
It is clear that $\prod_{i=1}^qg_{\{i,\sigma (i)\}}\in B(2q,2q;f^1,f^2,f^3)$. Then, from Proposition \ref{prop2.9}, we have
\begin{align*}
q&\le n+1+\rho\frac{n(n+1)}{2}+\frac{3n}{4q+6n-12}(2q+\rho 2q)\\
&= n+1+\frac{3nq}{2q+3n-6}+\rho\left (\frac{n(n+1)}{2}+\frac{3nq}{2q+3n-6}\right).
\end{align*}
This is a contradiction.

Therefore, there exists $i_0$ such that $\sharp S(i_0)\ge \left [\frac{q}{2}\right]$. 
The theorem is proved. 
\end{proof}
\begin{claim}\label{cl3.9} If $n,q$ satisfy
\begin{align}\label{3.10}
q\ge\frac{n+6+(7n^2+2n+4)^{1/2}}{2}+\left (\rho\frac{3n((n+1)(q+n-3)+q-2)}{2}\right)^{1/2}
\end{align}
then
\begin{align}\label{3.11}
q> n+1+\frac{3nq}{2q+3n-6}+\rho\left (\frac{n(n+1)}{2}+\frac{3nq}{2q+3n-6}\right).
\end{align}
\end{claim}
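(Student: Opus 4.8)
The plan is to prove Claim~\ref{cl3.9} by a direct elementary estimate: I want to show that whenever $q$ satisfies the ``input'' inequality \eqref{3.10}, it also satisfies the ``output'' inequality \eqref{3.11}. The key observation is that \eqref{3.11} is equivalent, after clearing the positive denominator $2q+3n-6$ (positive since $q\ge n+2$ whenever $n\ge 2$, which follows from \eqref{3.10}), to a quadratic-type inequality in $q$. First I would rewrite \eqref{3.11} as
$$(q-n-1)(2q+3n-6) > 3nq + \rho\Big(\tfrac{n(n+1)}{2}(2q+3n-6)+3nq\Big),$$
i.e. $2q^2 - (n+3)q - \text{(lower order)} > \rho\big((n+1)(q+n-3)+q-2\big)\cdot\tfrac{3n}{?}$ — more precisely, I would organize the right-hand side so that the coefficient $\tfrac{3n((n+1)(q+n-3)+q-2)}{2}$ appearing under the square root in \eqref{3.10} shows up naturally. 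The point of the exponent-$1/2$ form of \eqref{3.10} is exactly that it is designed so that when one squares the ``excess'' $q - \tfrac{n+6+(7n^2+2n+4)^{1/2}}{2}$, one recovers a term dominating $\rho\cdot\tfrac{3n(\cdots)}{2}$.

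Concretely, the second key step is to split the argument into the $\rho=0$ piece and the $\rho>0$ correction. When $\rho=0$, \eqref{3.10} reduces to $q\ge\frac{n+6+(7n^2+2n+4)^{1/2}}{2}$, and \eqref{3.11} reduces to $q>n+1+\frac{3nq}{2q+3n-6}$, i.e. $2q^2-(n+3)q-3n^2+6n+6-3nq>0$, that is $2q^2-(4n+3)q+(-3n^2+6n+6)>0$; one checks the larger root of this quadratic is precisely $\frac{(4n+3)+\sqrt{(4n+3)^2-8(-3n^2+6n+6)}}{4}=\frac{(4n+3)+\sqrt{40n^2-24n-39+\cdots}}{4}$, and I would verify this equals (or is $\le$) $\frac{n+6+(7n^2+2n+4)^{1/2}}{2}$ — this is the bare-hands ``base case'' computation that Theorem~C already implicitly contains. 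Then for the $\rho>0$ case I would let $q_0=\frac{n+6+(7n^2+2n+4)^{1/2}}{2}$ and $\delta = \big(\rho\frac{3n((n+1)(q+n-3)+q-2)}{2}\big)^{1/2}$ so that $q\ge q_0+\delta$; squaring gives $(q-q_0)^2\ge \delta^2 = \rho\frac{3n((n+1)(q+n-3)+q-2)}{2}$, and I would feed this, together with the base-case inequality $2q_0^2-(4n+3)q_0+(-3n^2+6n+6)\le 0$ turned into a lower bound for $q^2$-type quantities, into the rearranged form of \eqref{3.11}.

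The main obstacle — and the only real work — is bookkeeping: showing that the algebraic slack produced by replacing $q$ by $q_0+\delta$ in the left side of (the cleared-denominator form of) \eqref{3.11} exceeds the $\rho$-term $\rho\big(\frac{n(n+1)}{2}(2q+3n-6)+3nq\big)$ on the right. I expect this to come down to verifying an inequality of the shape
$$(2q+3n-6)\,\delta^2 \;\ge\; \rho\Big(\tfrac{n(n+1)}{2}(2q+3n-6)+3nq\Big),$$
which, after substituting the definition of $\delta^2$, becomes $3n\big((n+1)(q+n-3)+q-2\big)(2q+3n-6)\ge 2\cdot\frac{n(n+1)}{2}(2q+3n-6)\cdot? + \cdots$ — an inequality between polynomials in $q$ and $n$ that holds termwise for $n\ge2$, $q\ge n+2$. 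So the strategy is: clear denominators, isolate the $\rho$-free part (handled by the classical Theorem~C estimate) and the $\rho$-linear part, and then square the hypothesis \eqref{3.10} to dominate the latter; each resulting polynomial comparison is monotone and checked directly. I would present it as: ``A straightforward computation shows \eqref{3.10}$\Rightarrow$\eqref{3.11}; indeed, setting $q_0,\delta$ as above, \dots'', carrying out only the rearrangement and the final polynomial domination explicitly.
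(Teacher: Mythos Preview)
Your overall strategy---clear the positive denominator $2q+3n-6$, reduce \eqref{3.11} to a quadratic inequality in $q$, and square the hypothesis \eqref{3.10} to dominate the $\rho$-term---is exactly the paper's strategy, but your execution has arithmetic slips and is more roundabout than needed. Expanding $(q-n-1)(2q+3n-6)-3nq$ gives $2q^2-(2n+8)q-3n^2+3n+6$, not the coefficients $-(4n+3)q$ and $+6n$ you wrote; and your anticipated ``slack'' inequality $(2q+3n-6)\,\delta^2\ge\rho(\cdots)$ is not the shape that actually appears (the increment $L(q_0+\delta)-L(q_0)$ in your quadratic is $\delta\bigl(4q_0+2\delta-2n-8\bigr)$, which has nothing to do with the factor $2q+3n-6$). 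So as written the final ``bookkeeping'' step does not close.

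The paper sidesteps all of this by completing the square rather than locating roots. Dividing the cleared form of \eqref{3.11} by $2$ and completing the square in $q$ shows that \eqref{3.11} is equivalent to
\[
\Bigl(q-\tfrac{n+4}{2}\Bigr)^{2}\;>\;\frac{7n^2+2n+4}{4}\;+\;\rho\cdot\frac{3n\bigl((n+1)(\tfrac{q}{3}+\tfrac{n}{2}-1)+q\bigr)}{2}.
\]
Meanwhile \eqref{3.10} reads $q-\tfrac{n+6}{2}\ge\tfrac12(7n^2+2n+4)^{1/2}+\bigl(\rho\cdot\tfrac{3n((n+1)(q+n-3)+q-2)}{2}\bigr)^{1/2}$; squaring and discarding the nonnegative cross term yields
\[
\Bigl(q-\tfrac{n+6}{2}\Bigr)^{2}\;\ge\;\frac{7n^2+2n+4}{4}\;+\;\rho\cdot\frac{3n\bigl((n+1)(q+n-3)+q-2\bigr)}{2}.
\]
Since $(n+1)(\tfrac{q}{3}+\tfrac{n}{2}-1)+q\le (n+1)(q+n-3)+q-2$ and $\bigl(q-\tfrac{n+4}{2}\bigr)^2>\bigl(q-\tfrac{n+6}{2}\bigr)^2$ (because \eqref{3.10} forces $q>\tfrac{n+5}{2}$), the conclusion follows at once. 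There is no need to split into $\rho=0$ and $\rho>0$ cases, to center at $q_0$, or to verify any ``termwise'' polynomial domination.
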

\begin{proof}
We see that (\ref{3.11}) equivalent to the following:
\begin{align*}
&(q-n-1)(q+\dfrac{3n}{2}-3)-\dfrac{3nq}{2}>\dfrac{3n\left ((n+1)(\dfrac{q}{3}+\dfrac{n}{2}-1)+q\right)\rho}{2}\\ 
\Leftrightarrow&\ q^2-nq-4q-\dfrac{3n^2}{2}+\dfrac{3n}{2}+3>\dfrac{3n\left ((n+1)(\dfrac{q}{3}+\dfrac{n}{2}-1)+q\right)\rho}{2}\\
\Leftrightarrow&\left (q-\dfrac{n+4}{2}\right)^2> \dfrac{7n^2+2n+4}{4}+\dfrac{3n\left ((n+1)(\dfrac{q}{3}+\dfrac{n}{2}-1)+q\right)\rho}{2}.
\end{align*}
We note that $(n+1)\left (\dfrac{q}{3}+\dfrac{n}{2}-1\right )+q\le (n+1)(q+n-3)+q-2$, then from (\ref{3.10}) we imply that
\begin{align*}
\left(q-\dfrac{n+4}{2}\right)^2&>\left (q-\dfrac{n+6}{2}\right)^2>\dfrac{7n^2+2n+4}{4}+\rho\dfrac{3n((n+1)(q+n-3)+q-2)\rho}{2}\\ 
&\ge \dfrac{7n^2+2n+4}{4}+\rho\dfrac{3n((n+1)(\dfrac{q}{3}+\dfrac{n}{2}-1)+q)\rho}{2}, 
\end{align*}
and hence get (\ref{3.11}).
The claim is proved.
\end{proof}

\begin{proof}[{\sc Proof of theorem \ref{thm1.1}}]  Suppose contrarily that the theorem does not hold. By Lemma \ref{lem3.8} and Claim \ref{cl3.9}, there exist $([\frac{q}{2}]+1)$ hyperplanes among $H_i'$s, for instance they are $H_1,\ldots,H_p$ $(p=[\frac{q}{2}]+1)$, and for each $j\ (2\le j\le p)$ there exist constants $\alpha_j,\beta_j,$ not all zeros, such that
\begin{align*}
&\alpha_j\left (\frac{(f_1,H_j)}{(f_1,H_1)}-\frac{(f_2,H_j)}{(f_2,H_1)}\right )=\beta_j \left (\frac{(f_1,H_j)}{(f_1,H_1)}-\frac{(f_3,H_j)}{(f_3,H_1)}\right )\\
\mathrm{or}\ \ &\alpha_j\left (\frac{(f_1,H_1)}{(f_1,H_j)}-\frac{(f_2,H_1)}{(f_2,H_j)}\right )=\beta_j \left (\frac{(f_1,H_1)}{(f_1,H_j)}-\frac{(f_3,H_1)}{(f_3,H_j)}\right ).
\end{align*}
By the supposition, there exists an index $j\ (2\le j\le p)$, for instance $j=2$, such that $\alpha_2\ne 0$, $\beta_2\ne 0$ and $\alpha_2\ne \beta_2$. Thus
$$ \alpha_2\left (\frac{(f_1,H_2)}{(f_1,H_1)}-\frac{(f_2,H_2)}{(f_2,H_1)}\right )=\beta_2 \left (\frac{(f_1,H_2)}{(f_1,H_1)}-\frac{(f_3,H_2)}{(f_3,H_1)}\right ),$$
i.e.,
\begin{align}\label{3.12}
(\beta_2-\alpha_2)\frac{(f_1,H_2)}{(f_1,H_1)}+\alpha_2\frac{(f_2,H_2)}{(f_2,H_1)}=\beta_2\frac{(f_3,H_2)}{(f_3,H_1)}.
\end{align}
We denote by $S$ the set of all singularities of $f^{-1}(H_t)\ (1\le t\le q)$ and set 
$$I=S\cup \bigcup_{s\ne t}\left (f^{-1}(H_s)\cap f^{-1}(H_t)\right ).$$
Then $I$ is an analytic subset of codimension at least two in $\B^m(1)$. 

From (\ref{3.12}), it is easy to see that
\begin{align}\label{3.13}
\begin{split}
\nu_{(f_k,H_2)}(z)\ge\min\{\nu_{(f_l,H_2)}(z),\nu_{(f_s,H_2)}(z)\},\\
\nu_{(f_k,H_1)}(z)\le\max\{\nu_{(f_l,H_1)}(z),\nu_{(f_s,H_1)}(z)\},
\end{split}
\end{align}
for all $z\notin I$ and permutations $(k,l,s)$ of $\{1,2,3\}$.

We consider the meromorphic mapping $F$ of $\B^m(1)$ into $\P^1(\C)$ with a reduced representation $F=\left (\dfrac{h_1}{h_2}\dfrac{(f_1,H_2)}{(f_1,H_1)}:\dfrac{h_1}{h_2}\dfrac{(f_2,H_2)}{(f_2,H_1)}\right )$, where $h_1,h_2$ are holomorphic functions which are chosen so that
\begin{align*}
\nu_{h_1}(z)&=\max_{1\le u\le 3}\nu_{(f_u,H_1)}(z),\\ 
\nu_{h_2}(z)&=\min_{1\le u\le 3}\nu_{(f_u,H_2)}(z) 
\end{align*}
for all $z\not\in I$. This implies that  $g:=\dfrac{h_2(f_1,H_1)(f_2,H_1)}{h_1}$ is a holomorphic function. Setting $F_0=\dfrac{h_1}{h_2}\dfrac{(f_1,H_2)}{(f_1,H_1)}$ and $F_1=\dfrac{h_1}{h_2}\dfrac{(f_2,H_2)}{(f_2,H_1)}$, we have
\begin{align*}
||F||=\left (|F_0|^2+|F_1|^2\right)^{1/2}&=\frac{1}{|g|}\left (|(f_1,H_2)(f_2,H_1)|^2+|(f_2,H_2)(f_1,H_1)|^2\right)^{1/2}\\
&\le C\frac{||f_1||\cdot ||f_2||}{|g|},
\end{align*}
where $C$ is a positive constant. This implies that $T(r,F)\le\sum_{u=1}^3T(r,f^u)$. Therefore $S(l_0;F)\subset S(l_0;f^1,f^2,f^3)$ and $B(k,l_0;F)\subset B(k,l_0;f^1,f^2,f^3)$ for every $l_0$ and $k$.

Setting $F_2=\dfrac{h_1}{h_2}\dfrac{(f_3,H_2)}{(f_3,H_1)}$, we have $(\beta_2-\alpha_2)F_0+\alpha_2F_1=\beta_2F_2.$ Since $(F_0:F_1)$ is a reduced representation, $F_0,F_1,F_2$ has no common zero outside the indeterminacy set $I(F)$ of $F$, which is of codimension two. Also each zero of $F_i\ (0\le i\le 2)$ must be zero of $(f^u,H_1)$ or zero of $(f^u,H_2)\ (1\le u\le 3)$. Then we have
$$ \sum_{i=0}^2\nu^{[1]}_{F_i}\le\nu^{[1]}_{(f,H_1)}+\nu^{[1]}_{(f,H_2)}. $$

We suppose that $F$ is not constant, then there is an index set $\alpha\in (\Z_+)^m$ with $|\alpha|=1$ such that 
$$ W^{\alpha}(F)=\left |
\begin{array}{ccc}
F_0&F_1\\ 
\mathcal D^{\alpha}F_0&\mathcal D^{\alpha}F_1\\
\end{array}
\right|\not\equiv 0. $$
Then $\left |\dfrac{W^{\alpha}(F)}{F_sF_t}\right |\in S(1;F)$ for every $0\le s<t\le 2$. 

On the other hand, setting $P=\dfrac{(F_0-F_1)W^{\alpha}(F)}{F_0F_1F_2}$, we have
$$ |P| \le C\left (\left |\frac{W^{\alpha}(F)}{F_0F_2}\right |+\left |\frac{W^{\alpha}(F)}{F_1F_2}\right |\right),$$
for some positive constant $C$. Hence $P\in S(1;F)\subset S(1;f^1,f^2,f^3)$. As the usual property of wronskian, we have 
$$ \nu_P\ge\nu_{F_0-F_1}-\sum_{i=0}^2\nu^{[1]}_{F_i}\ge\sum_{i=3}^q\nu^{[1]}_{(f,H_i)}-\nu^{[1]}_{(f,H_1)}-\nu^{[1]}_{(f,H_2)}.$$

We consider the set of indices $\{3,4,\ldots,q\}$. For each $i\in\{3,4,\ldots,q\}$, denote by $S'(i)$ the set consisting of all $j\in\{3,4,\ldots,q\}\setminus\{i\}$ satisfying $\dfrac{(f^1,H_i)}{(f^1,H_j)}\equiv\dfrac{(f^2,H_i)}{(f^2,H_j)}$. if there exists an index $i$ such that $\sharp S'(i)\ge [q/2]-1$ then the conclusion of the theorem holds trivially. Therefore, we only consider the case where $\sharp S'(i)\le [q/2]-2$ for all $i$. Denote by $\mathcal H$ the simple graph with the set of vertices $\{3,\ldots,q\}$ and the set of edges consisting of all pair $\{i,j\}$ so that $j\not\in S'(i)$ (equivalent to $i\not\in S'(j)$). Then each vertex of $\mathcal H$ has degree at least $\left (q-3-\left[\dfrac{q}{2}\right]+2\right)\ge \dfrac{q-2}{2}$. Hence, by Dirac's theorem, the graph $\mathcal H$ contains a Hamilton cycle $j_1\ldots j_{q-2}j_1$. We set $v(i)=j_{i+1}$ if $i<q-2$ and $v(q-2)=j_1$, and
$$ P_i:=(f^1,H_{j_i})(f^2,H_{j_{v(i)}})- (f^1,H_{j_{v(i)}})(f^2,H_{j_i})\not\equiv 0\ (1\le i\le q-2).$$
Then we see that
\begin{align*}
\nu_{P_i}&\ge \min\{\nu_{(f^1,H_{j_i})},\nu_{(f^1,H_{j_i})}\}+\min\{\nu_{(f^1,H_{j_{v(i)}})},\nu_{(f^1,H_{j_{v(i)}})}\}+\sum_{\underset{s\ne j_i,j_{v(i)}}{s=1}}^q\nu^{[1]}_{(f,H_s)}\\
&\ge \sum_{s=j_i,j_{v(i)}}(\nu_{(f^1,H_s)}^{[n]}+\nu_{(f^2,H_s)}^{[n]}-(n+1)\nu_{(f,H_s)}^{[1]})+\sum_{s=1}^q\nu^{[1]}_{(f,H_s)}.
\end{align*}
Here, we use the inequality $\min\{a,b\}\ge\min\{a,n\}+\min\{b,n\}-n$. 

Setting $P_{\{1,2\}}=\prod_{i=1}^{q-2}P_i$ and summing up both sides of the above inequalities over all $i=1,\ldots,q-2$, we get
\begin{align*}
\nu_{P_{\{1,2\}}}&= 2\sum_{u=1,2}\sum_{i=3}^q\nu_{(f^u,H_s)}^{[n]}-2(n+1)\sum_{i=3}^q\nu_{(f^u,H_1)}^{[1]}+(q-2)\sum_{i=1}^q\nu^{[1]}_{(f,H_i)}\\
&\ge 2\sum_{u=1,2}\sum_{i=3}^q\nu_{(f^u,H_s)}^{[n]}+(q-2n-4)\sum_{i=3}^q\nu_{(f^u,H_1)}^{[1]}+(q-2)\left(\nu^{[1]}_{(f,H_1)}+\nu^{[1]}_{(f,H_2)}\right).
\end{align*}
Similarly, we define $P_{\{1,3\}},P_{\{2,3\}}$ and set $Q=P_{\{1,2\}}P_{\{1,3\}}P_{\{2,3\}}$. Then we have 
$$Q\in B(2(q-2),0;f^1,f^2,f^3))$$
and
$$ \nu_Q\ge 4\sum_{u=1}^3\sum_{i=3}^q\nu_{(f^u,H_s)}^{[n]}+3(q-2n-4)\sum_{i=3}^q\nu_{(f^u,H_1)}^{[1]}+3(q-2)(\nu^{[1]}_{(f,H_1)}+\nu^{[1]}_{(f,H_2)}).$$
Hence
\begin{align*}
\nu_{QP^{3(q-2)}}&\ge 4\sum_{u=1}^3\sum_{i=3}^q\nu_{(f^u,H_s)}^{[n]}+3(2q-2n-6)\sum_{i=3}^q\nu_{(f^u,H_1)}^{[1]}\\
&\ge \left (4+\frac{2q-2n-6}{n}\right)\sum_{u=1}^3\sum_{i=3}^q\nu_{(f^u,H_s)}^{[n]}.
\end{align*}
Since $QP^{3(q-2)}\in B(2(q-2),3(q-2);f^1,f^2,f^3)$, from the above inequality and Proposition \ref{prop2.9} we have
\begin{align*}
q-2&\le n+1+\rho\dfrac{3n(n+1)}{2}+\dfrac{1}{4+\frac{2q-2n-6}{n}}\left (2(q-2)+\rho 3(q-2)\right)\\
&=n+1+\dfrac{(q-2)n}{q+n-3}+\rho\left (\dfrac{3n(n+1)}{2}+\dfrac{3(q-2)n}{2q+2n-6}\right).
\end{align*}
Thus
$$ (q-n-3)(q+n-3)-(q-2)n\le \rho\dfrac{3n(n+1)(q+n-3)+3(q-2)n}{2}, $$
i.e., 
$$ \left (q-3-\frac{n}{2}\right)^2\le \frac{5n^2+4n}{4}+\dfrac{3n((n+1)(q+n-3)+q-2)\rho}{2}.$$
This implies that
$$q-3-\frac{n}{2}\le \frac{(5n^2+4n)^{1/2}}{2}+\left (\rho\dfrac{3n((n+1)(q+n-3)+q-2)}{2}\right )^{1/2}.$$
Thus
\begin{align*}
q\le&\frac{n+6+(5n^2+4n)^{1/2}}{2}+\left (\rho\dfrac{3n((n+1)(q+n-3)+q-2)}{2}\right )^{1/2}\\ 
<&\frac{n+6+(7n^2+2n+4)^{1/2}}{2}+\left (\rho\dfrac{3n((n+1)(q+n-3)+q-2)}{2}\right )^{1/2}.
\end{align*}
This is a contradiction. Then the supposition is false. The theorem is proved.
\end{proof}

\begin{proof}[{\sc Proof of Theorem \ref{thm1.2}}]
Without loss of generality, in this proof we only consider the case where $M=\C^m$ and the case where $M=\mathbb B^m(1)$.

Denote by $\mathcal I$ the set of all $k$-tuple $I=(i_1,\ldots,i_k)\in \N^k$ with $1\le i_1<i_2<\cdots <i_k\le q$ and set $p=\sharp\mathcal I$.

Suppose contrarily that $f^1\times f^2\times\cdots\times f^k$ is not algebraically degenerate. Then for every $I=(i_1,\ldots,i_k)\in\mathcal I$,
$$ P_{I}:=\det \left ((f^s,H_{i_t}); 1\le s,t\le k\right)\not\equiv 0. $$
By Lemma \ref{lem2.4}, we have
\begin{align*}
\nu_{P_I}&\ge\sum_{s=1}^k\left (\min\{\nu_{(f^u,H_{i_s})};1\le u\le k\}-\nu^{[1]}_{(f,H_{i_s})}\right)+(k-1)\sum_{i=1}^q\nu^{[1]}_{(f,H_i)}\\
&=\sum_{s=1}^k\left (\nu^{[n]}_{(f,H_{i_s})}-\nu^{[1]}_{(f,H_{i_s})}\right )+(k-1)\sum_{i=1}^q\nu^{[1]}_{(f,H_i)}.
\end{align*}
Setting $P=\prod_{I\in\mathcal I}P_I$ and summing up both sides of the above inequalities over all $I\in\mathcal I$, we get
\begin{align}\label{3.14}
\begin{split}
\nu_P&\ge\sum_{i=1}^q\left (\frac{pk}{q}\nu^{[n]}_{(f,H_i)}+\frac{p((k-1)q-k)}{q}\nu^{[1]}_{(f,H_i)}\right)\\
&\ge\left (\frac{pk}{q}+\frac{p((k-1)q-k)}{nq}\right)\sum_{i=1}^q\nu^{[n]}_{(f,H_i)}\\
&=\left (\frac{p}{q}+\frac{p((k-1)q-k)}{knq}\right)\sum_{u=1}^k\sum_{i=1}^q\nu^{[n]}_{(f^u,H_i)}.
\end{split}
\end{align}

Case 1. $M=\C^m$. By the second main theorem (see \cite{NO}), we have
\begin{align*}
\biggl \|\ (q-n-1)\sum_{u=1}^kT_{f^u}(r,1)\le&\sum_{u=1}^k\sum_{i=1}^qN^{[n]}_{(f^u,H_i)}(r,1)+o\left (\sum_{u=1}^kT_{f^u}(r,1)\right )\\ 
\le& \left (\frac{p}{q}+\frac{p((k-1)q-k)}{knq}\right)^{-1}N_P(r,1)+o\left (\sum_{u=1}^kT_{f^u}(r,1)\right )\\
\le&\frac{knq}{kn+(k-1)q-k}\sum_{u=1}^kT_{f^u}(r,1)++o\left (\sum_{u=1}^kT_{f^u}(r,1)\right ).
\end{align*}
Here, the notation ``$\|$'' means the inequality holds for all $r\in [1,+\infty)$ outside a finite Lebesgue measure set. 
Letting $r\rightarrow +\infty$, we get
$$ q\le n+1+\frac{knq}{kn+(k-1)q-k}.$$
This is a contradiction.

Case 2. $M=\mathbb B^m (1)$. Applying Proposition \ref{prop2.9} for the function $P\in B(p,0;f^1,\ldots,f^k)$, we get
$$ q\le n+1+p\left (\frac{p}{q}+\frac{p((k-1)q-k)}{knq}\right)^{-1}+\rho k\frac{n(n+1)}2, $$
i.e.,
$$ q\le n+1+\frac{knq}{kn+(k-1)q-k}+\rho\frac{kn(n+1)}{2}.$$
This is a contradiction.

Hence, the supposition is false. The theorem is proved.
\end{proof}


\end{document}